\DeclareMathOperator{\hgt}{ht}
\DeclareMathOperator{\fm}{\mathfrak{m}}
\DeclareMathOperator{\fp}{\mathfrak{p}}
\DeclareMathOperator{\fq}{\mathfrak{q}}
\DeclareMathOperator{\EE}{E}
\DeclareMathOperator{\HH}{H}
\DeclareMathOperator{\II}{I}
\DeclareMathOperator{\cF}{\mathcal{F}}
\DeclareMathOperator{\cM}{\mathcal{M}}
\DeclareMathOperator{\image}{Im}
\DeclareMathOperator{\Ass}{Ass}
\DeclareMathOperator{\injdim}{inj.dim}
\DeclareMathOperator{\End}{{End}}
\renewcommand{\phi}{\varphi}
\renewcommand{\theta}{\vartheta}
\renewcommand{\epsilon}{\varepsilon}
\renewcommand{\to}[1][]{\xrightarrow{\ #1\ }}
\title{On injective dimension of $F$-finite $F$-modules and holonomic $D$-modules}
\author{Wenliang Zhang}
\address{Department of Mathematics, Statistics, and Computer Science, University of Illinois, Chicago, IL 60607, USA}
\email{wlzhang@uic.edu}
\thanks{The author is partially supported by the National Science Foundation.}
\begin{document}
\maketitle

\begin{abstract}
We investigate injective dimension of $F$-finite $F$-modules in characteristic $p$ and holonomic $D$-modules in characteristic 0. One of our main results is the following. If either
\begin{enumerate}
\item $R$ is a regular ring of finite type over an infinite field of characteristic $p>0$ and $\scr{M}$ is an $F_R$-finite $F_R$-module; or
\item $R=k[x_1,\dots,x_n]$ where $k$ is a field of characteristic 0 and $\scr{M}$ is a holonomic $D(R,k)$-module. 
\end{enumerate}
then $\injdim_R(\scr{M})=\dim(\Supp_R(\scr{M}))$.
\end{abstract}
\section{Introduction}
Let $R$ be a regular commutative noetherian of characteristic $p$ and let $\injdim_R(M)$ denote the injective dimension of an $R$-module $M$. It was proved in \cite{HunekeSharpBassnumbersoflocalcohomologymodules} that $\injdim_R(\HH^i_J(R))\leq \dim(\Supp_R(\HH^i_J(R)))$ for each ideal $J$ of R, where $\HH^i_J(R)$ denotes the $i$th local cohomology of $R$ supported in an ideal $J$. This result was then generalized further in \cite{LyubeznikFModulesApplicationsToLocalCohomology} which introduced a theory of $F_R$-modules (this will be reviewed in Section \ref{section: preliminaries}) and proved that $\injdim_R(\scr{M})\leq \dim(\Supp_R(\scr{M}))$ for each $F_R$-module $\scr{M}$ and that $\HH^i_J(R)$ is an $F_R$-module. 

In an interesting paper \cite{PuthenpurakaInjectiveResolutionofLC}, it is proved that $\injdim_R(\mathcal{T}(R))=\dim(\Supp_R(\mathcal{T}(R)))$ for a polynomial ring $R=k[x_1,\dots,x_n]$ in characteristic 0. Here $\mathcal{T}$ is the Lyubeznik functor. Due to its technicality we omit the definition of $\mathcal{T}$ and refer the reader to \cite{LyubeznikFinitenessLocalCohomology} for details. We should remark that a primary example of $\mathcal{T}$ is the repeated local cohomology functor $\HH^{i_1}_{j_1}\cdots \HH^{i_s}_{J_s}(-)$ and that $\mathcal{T}(R)$ is a holonomic $D(R,k)$-module (theory of $D(R,k)$-modules will be reviewed in Section \ref{section: preliminaries}). It's asked in \cite[page 711]{PuthenpurakaInjectiveResolutionofLC} whether the same result holds in characteristic $p$. The main goal of this short note is twofold: to give a positive answer to this question in characteristic $p$ and to prove a stronger result in characteristic 0. Here are our main results.

\begin{theorem}[Theorems \ref{theorem: only max ideal at last spot} and \ref{theorem: only max ideal at last spot for D-modules}]
\label{thm: intro last spot}
Assume either 
\begin{enumerate}
\item $R$ is a commutative noetherian regular Jacobson ring of characteristic $p>0$ and $\scr{M}$ is an $F_R$-finite $F_R$-module; or
\item $R=k[x_1,\dots,x_n]$ is a polynomial over a field $k$ of characteristic 0 and $\scr{M}$ is a holonomic $D(R,k)$-module.
\end{enumerate} 
Set $t:=\injdim_R(\scr{M})$. Then \[\mu^t(\fp,\scr{M})=0\] for each non-maximal prime ideal $\fp$ of $R$, where $\mu^t(\fp,\scr{M})$ is the $t$-th Bass number of $\scr{M}$ with respect to $\fp$ ({\it i.e.} $\mu^t(\fp,\scr{M})=\dim_{\kappa(\fp)}\Ext^t_{R_{\fp}}(\kappa(\fp),\scr{M}_{\fp})$ ). 
\end{theorem}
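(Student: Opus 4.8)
The plan is to reduce the statement to the local cohomology case via a dévissage on the module $\scr{M}$, exploiting the fact that both $F_R$-finite $F_R$-modules and holonomic $D$-modules form abelian categories closed under the relevant operations, and that every simple object in these categories is (essentially) an intermediate extension supported on an irreducible closed set. Concretely, set $d:=\dim(\Supp_R(\scr{M}))$. By the results recalled in the introduction (\cite{LyubeznikFModulesApplicationsToLocalCohomology} in characteristic $p$, \cite{PuthenpurakaInjectiveResolutionofLC} in characteristic $0$) we already know $t=\injdim_R(\scr{M})=d$, so it suffices to show $\mu^d(\fp,\scr{M})=0$ whenever $\hgt(\fp)<\dim R$, equivalently whenever $\dim(R/\fp)>0$. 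First I would recall the standard computation of Bass numbers in terms of local cohomology of the localized module: $\mu^j(\fp,\scr{M})$ depends only on $\scr{M}_{\fp}$ over $R_{\fp}$, and the localization $\scr{M}_{\fp}$ is again an $F_{R_{\fp}}$-finite $F$-module (resp.\ holonomic over $D(R_{\fp})$ after the appropriate base change), with support of dimension $\hgt(\fp) =: h$.

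The second step is the key reduction. Since $\fp$ is not maximal, $R/\fp$ has positive dimension, so there is a prime $\fq \supsetneq \fp$ with $\hgt(\fq) = h+1 \le d$; after replacing $R$ by $R_\fq$ we may assume $(R,\fm)$ is local of dimension $h+1$, $\fp$ has height $h = \dim R - 1$, and we must show $\mu^d(\fp, \scr{M}) = 0$, i.e.\ $\Ext^d_{R_\fp}(\kappa(\fp), \scr{M}_\fp) = 0$ where now $d = h+1 = \dim R$. But $\injdim_{R_\fp}(\scr{M}_\fp) \le \dim(\Supp_{R_\fp}(\scr{M}_\fp)) \le h = d - 1$ by the Huneke--Sharp--Lyubeznik bound applied over the local ring $R_\fp$ (whose dimension is $h$), since $\scr{M}_\fp$ is an $F$-finite $F$-module (resp.\ holonomic $D$-module) over $R_\fp$. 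Hence $\Ext^d_{R_\fp}(\kappa(\fp),\scr{M}_\fp)=0$ automatically, and we are done. The point is simply that localizing at a non-maximal prime $\fp$ strictly drops the dimension of the support below $\dim R$, hence strictly below $t = d$, so the top Bass number at $\fp$ must vanish for dimension reasons alone.

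Thus the real content is entirely in the input theorem $t = \dim \Supp \scr{M}$ together with the Lyubeznik-type inequality $\injdim_R(\scr{M}) \le \dim(\Supp_R \scr{M})$ holding over \emph{every} localization $R_\fp$ — which requires knowing that the class of modules in question is stable under localization. In characteristic $p$ this is the statement that a localization of an $F_R$-finite $F_R$-module is an $F_{R_\fp}$-finite $F_{R_\fp}$-module, which follows from the generating-morphism description of $F$-finite $F$-modules and the fact that Frobenius commutes with localization; in characteristic $0$ this is the stability of holonomicity under localization (equivalently, $\scr{M}_\fp$ is holonomic over $D(R_\fp, k)$ because localization is an exact functor that sends holonomic modules to holonomic modules, $\fp$ being generated by finitely many elements).

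**Main obstacle.** The only delicate point I anticipate is the bookkeeping around the Jacobson hypothesis in case (1): one needs $\dim(\Supp_R \scr{M})$ to be computed correctly and one needs that $\hgt(\fp) + \dim(R/\fp) = \dim R$ along the relevant chains so that "non-maximal $\fp$" genuinely forces $\hgt(\fp) < \dim R$ after localizing; the Jacobson/finite-type assumption is precisely what guarantees the dimension formula and catenarity that make this reduction clean. If $R$ is merely regular (hence catenary) and we localize at the appropriate $\fq$, equidimensionality of the local ring is automatic, so I expect this to go through without friction — but it is the step I would write out carefully rather than assert.
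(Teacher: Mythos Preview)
Your argument has a genuine gap: you assume at the outset that $t = \injdim_R(\scr{M}) = \dim(\Supp_R(\scr{M}))$, citing \cite{LyubeznikFModulesApplicationsToLocalCohomology} and \cite{PuthenpurakaInjectiveResolutionofLC}. Neither reference gives this. Lyubeznik proves only the inequality $\injdim_R(\scr{M}) \le \dim(\Supp_R(\scr{M}))$ for $F$-modules, and Puthenpurakal proves the equality only for modules of the special form $\mathcal{T}(R)$ in characteristic $0$, not for arbitrary holonomic $D$-modules. The equality for general $F_R$-finite $F_R$-modules and for general holonomic $D$-modules is precisely Theorem~\ref{theorem: injective dimension} of this paper, and its proof (see Theorems~\ref{theorem: injective dim of F-module in char p} and~\ref{thm: injective dim of holonomic D-module}) \emph{uses} the present theorem as input. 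So your reduction is circular.

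Without the equality $t=d$, your argument does not close. Assuming $\mu^t(\fp,\scr{M})\neq 0$ for a non-maximal $\fp$, Lyubeznik's bound over $R_\fp$ gives only $t \le \dim(\Supp_{R_\fp}(\scr{M}_\fp)) \le \dim(\Supp_R(\scr{M}))-1$, which is perfectly compatible with the known inequality $t \le \dim(\Supp_R(\scr{M}))$; there is no contradiction. The paper's proof proceeds quite differently: it first shows (Propositions~\ref{proposition: height d-1},~\ref{proposition: height d-2} in characteristic $p$; Propositions~\ref{proposition: height n-1 for D-modules},~\ref{proposition: height n-2 for D-modules} in characteristic $0$) that $\EE(R/\fp)$ is never $F_R$-finite (resp.\ of finite $D$-module length) when $\fp$ is non-maximal, and then argues that if $\mu^t(\fp,\scr{M})\neq 0$ one can identify $\HH^t_\fp(\scr{M})$, which \emph{is} $F_R$-finite (resp.\ holonomic), with a nonzero direct sum of copies of $\EE(R/\fp)$, yielding the contradiction. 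The Jacobson hypothesis enters not through dimension formulas as you anticipated, but to guarantee infinitely many maximal ideals over each non-maximal prime and to find maximal ideals avoiding finitely many given associated primes.
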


\begin{theorem}[Theorems \ref{theorem: injective dim of F-module in char p} and \ref{thm: injective dim of holonomic D-module}]
\label{theorem: injective dimension}
Assume either
\begin{enumerate}
\item $R$ is a regular ring of finite type over an infinite field $k$ of characteristic $p>0$ and $\scr{M}$ is an $F_R$-finite $F_R$-module; or
\item $R=k[x_1,\dots,x_n]$ is a polynomial over a field $k$ of characteristic 0 and $\scr{M}$ is a holonomic $D(R,k)$-module.
\end{enumerate} 
Then 
\[\injdim_R(\scr{M})=\dim_R(\Supp_R(\scr{M})).\]

\end{theorem}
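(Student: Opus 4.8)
The first observation is that the inequality $\injdim_R(\scr{M}) \le \dim_R(\Supp_R(\scr{M}))$ is already known: in characteristic $p$ this is the theorem of Lyubeznik from \cite{LyubeznikFModulesApplicationsToLocalCohomology} (valid for all $F_R$-modules, not just $F$-finite ones), and in characteristic $0$ the analogous bound for holonomic $D(R,k)$-modules follows from the same circle of ideas (finiteness of Bass numbers plus the bound on the dimension of associated primes of $\Ext$-modules). So the entire content is the reverse inequality $\injdim_R(\scr{M}) \ge \dim_R(\Supp_R(\scr{M}))$. Write $d := \dim_R(\Supp_R(\scr{M}))$ and $t := \injdim_R(\scr{M})$; we must show $t \ge d$, and since $t \le d$ is automatic, this will give equality.

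The key reduction is to Theorem~\ref{thm: intro last spot}. Suppose for contradiction that $t < d$. Pick a prime $\fp \in \Supp_R(\scr{M})$ with $\dim(R/\fp) = d$; such a $\fp$ is necessarily \emph{non-maximal} (since $d \ge 1$ — the case $d=0$ forces $t=0=d$ trivially, as $\scr{M} \ne 0$). Because $\fp \in \Supp_R(\scr{M})$, the localization $\scr{M}_\fp$ is a nonzero $R_\fp$-module of finite injective dimension, hence it has some nonvanishing Bass number at its top: there exists $j$ with $\mu^j(\fp, \scr{M}) \ne 0$, and the largest such $j$ equals $\injdim_{R_\fp}(\scr{M}_\fp) \le \injdim_R(\scr{M}) = t$. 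So far this only gives a nonvanishing Bass number at some spot $\le t$, not at $t$ itself. To force the nonvanishing to occur exactly at $t$, I would invoke the rigidity/consecutiveness of nonvanishing Bass numbers for modules over regular (or Gorenstein) local rings together with the structural input: one shows that once $\mu^j(\fp,\scr{M}) \ne 0$ for the top $j$ at $\fp$, the module $\scr{M}$ in fact has a nonvanishing Bass number at $\fp$ all the way up to $t$. Concretely, using that $\scr{M}$ is an $F_R$-finite $F_R$-module (resp. holonomic $D$-module) one can propagate: $\mu^t(\fp,\scr{M}) = \mu^{t - \dim(R/\fp)}(\fm, \mathcal{N})$ for a suitable subquotient $\mathcal{N}$ supported at $\fm$, or more simply one uses that the injective dimension is computed at the maximal ideals (Jacobson hypothesis) so that $t = \injdim_R(\scr{M})$ is witnessed by $\mu^t(\fm,\scr{M}) \ne 0$ for some maximal $\fm$, and then Theorem~\ref{thm: intro last spot} says that at this top spot $t$ \emph{only} maximal ideals contribute. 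The contradiction I am after is instead extracted as follows: if $t < d$, I want to produce a \emph{non-maximal} prime with $\mu^t(\fp,\scr{M}) \ne 0$, contradicting Theorem~\ref{thm: intro last spot}.

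To produce that non-maximal prime, I would argue by descending induction on $\dim(R/\fp)$ combined with the following local statement, which is where the infinite-field / finite-type hypothesis (resp. the polynomial-ring hypothesis in char $0$) enters. Choose $\fp$ with $\dim(R/\fp) = d$ as above, so $\scr{M}_\fp \ne 0$. Over the regular local ring $R_\fp$ of dimension $n - d$ (where $n = \dim R$, assuming $R$ equidimensional; in general one works with $\hgt \fp$), the module $\scr{M}_\fp$ has finite injective dimension $\le t$. The crucial claim is that in fact $\injdim_{R_\fp}(\scr{M}_\fp) = t$ — i.e. localizing at a \emph{generic} point of a top-dimensional component of $\Supp\scr{M}$ does not drop the injective dimension. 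Granting this, $\mu^t(\fp,\scr{M}) \ne 0$ for this non-maximal $\fp$, contradicting Theorem~\ref{thm: intro last spot}(a) (resp. (b)), and we conclude $t \ge d$. The non-drop claim is the heart of the matter: it should follow by a prime-avoidance / generic-hyperplane-section argument. Specifically, one reduces modulo a sufficiently general linear form (using that $k$ is infinite, or taking generic elements of the maximal ideal in the polynomial setting), applying the fact — available from the $F$-module / $D$-module formalism reviewed in Section~\ref{section: preliminaries} — that the relevant categories are stable under such operations and that a general hyperplane section cuts the support properly and drops both $\dim\Supp$ and $\injdim$ by exactly one. Iterating this $d$ times and then comparing with the base case at a maximal ideal pins down the top Bass number to sit at $\fp$. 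The main obstacle, I expect, is precisely controlling the behavior of $\injdim$ and of the top Bass number under this generic-hyperplane reduction — making sure the reduction stays in the category ($F$-finite $F$-module, resp. holonomic $D$-module), that its support dimension drops by exactly one, and that no cancellation kills the top Bass number — and this is why the theorem needs the extra hypotheses (infinite field, finite type; or polynomial ring) that Theorem~\ref{thm: intro last spot} alone does not require.
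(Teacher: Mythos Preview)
Your high-level strategy --- produce a non-maximal prime $\fp$ with $\mu^t(\fp,\scr{M})\ne 0$ and contradict Theorem~\ref{thm: intro last spot} --- is exactly the paper's. But your mechanism for producing that prime contains a genuine error, and the fix you sketch is not the one that works.

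The ``crucial claim'' that $\injdim_{R_\fp}(\scr{M}_\fp)=t$ when $\fp$ is a minimal prime of $\Supp_R(\scr{M})$ is false. For such $\fp$ the localization $\scr{M}_\fp$ has zero-dimensional support over $R_\fp$, so by the very Lyubeznik bound you begin with (or Proposition~\ref{prop: D-module localized at minimal prime} in the $D$-module case) one has $\injdim_{R_\fp}(\scr{M}_\fp)=0$, not $t$. Your fallback of reducing modulo a general linear form $y$ is also not viable: in characteristic $p$ the quotient $\scr{M}/y\scr{M}$ carries no natural $F_{R/yR}$-module structure, and in characteristic $0$ one would need derived restriction, through which injective dimension is hard to track. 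So neither route yields the needed non-maximal prime.

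The paper's dimension-reduction is by \emph{localization}, not by hyperplane section. Using Noether normalization over the infinite field $k$, one finds $y\in R$ algebraically independent over $k$ with $k[y]\cap\fq_i=0$ for every top-dimensional associated prime $\fq_i$ of $\scr{M}$; set $S=k[y]\setminus\{0\}$. Then $S^{-1}R$ is again regular of finite type over the infinite field $k(y)$ (resp.\ a polynomial ring in one fewer variable over $k(x_n)$), and $S^{-1}\scr{M}$ is again $F$-finite (Remark~\ref{proposition: properties of F-modules}(e)) resp.\ holonomic (Proposition~\ref{prop: holonomic localized}), with $\dim\Supp$ exactly $s-1$. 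By induction $\injdim_{S^{-1}R}(S^{-1}\scr{M})=s-1$, so some prime $P$ of $S^{-1}R$ has $\mu^{s-1}(P,S^{-1}\scr{M})\ne 0$; since Bass numbers are unchanged by localization, its contraction $\fp\subset R$ satisfies $\mu^{s-1}(\fp,\scr{M})\ne 0$, and $\fp$ is non-maximal because $\fp\cap S=\varnothing$. This forces $\injdim_R(\scr{M})\ge s-1$; were equality to hold, Theorem~\ref{thm: intro last spot} would be violated at $\fp$. Hence $\injdim_R(\scr{M})=s$. The key point you missed is that inverting a transcendental variable, unlike killing it, is compatible with both the $F$-module and $D$-module formalisms \emph{and} with Bass numbers.
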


\subsection*{Acknowledgements} The author would like to thank Gennady Lyubeznik for helpful discussions, David Ben-Zvi and Daniel Caro for answering questions on $D$-modules, and Tony Puthenpurakal for comments on a draft of this paper. The author is grateful to the referee for his/her suggestions that improve the exposition of this paper.
\section{Preparatory results on $F_R$-modules and $D$-modules}
\label{section: preliminaries}
In this section, we review some basic notions and results in the theories of $F$-modules, $D$-modules and Jacobson rings. We also prove some new results on $F$-modules and $D$-modules that are needed in the sequel.

\subsection{$F_R$-modules} Let $R$ be a commutative noetherian regular ring of characteristic $p$. Let $F_R$ denote the Peskine-Szpiro functor: 
\[F_R(M):=R^{(1)}\otimes_RM\]
for each $R$-module $M$, where $R^{(1)}$ denote the $R$-module that is the same as $R$ as a left $R$-module and whose right $R$-module structure is given by $r'\cdot r=r^pr'$ for all $r'\in R^{(1)}$ and $r\in R$.

\begin{remark}
\label{remark: F commutes with homomorphism}
Given a homomorphism $\phi:R\to R'$ of rings of characteristic $p$, it is clear that $\phi(r^p)=\phi(r)^p$ for each $r\in R$, Hence $\phi\circ F_R=F_S\circ \phi$. Consequently there is an identification of functors $R' \otimes_R F_R(-)=F_{R'}(R'\otimes_R-)$, {\it i.e.} $R' \otimes_R F_R(M)=F_{R'}(R'\otimes_RM)$ for each $R$-module $M$ and it is functorial in $M$.

In particular, if $S$ is a multiplicatively closed subset of $R$, we have $S^{-1}F_R(M)=F_{S^{-1}R}(S^{-1}M)$.
\end{remark}

\begin{definition}[Definitions 1.1, 1.9 and 2.1 in \cite{LyubeznikFModulesApplicationsToLocalCohomology}]
\label{definition: F-modules}
An $F_R$-module is an $R$-module $\scr{M}$ equipped with an $R$-linear isomorphism $\theta_{\scr{M}}:\scr{M}\to F_R(\scr{M})$. 

A homomorphism between $F_R$-modules $(\scr{M},\theta_{\scr{M}})$ and $(\scr{N},\theta_{\scr{N}})$ is a homomorphism $\varphi:\scr{M}\to \scr{N}$ such that the following is a commutative diagram
\[\xymatrix{
\scr{M} \ar[r]^{\theta_{\scr{M}}} \ar[d]^{\varphi} & F_R(\scr{M}) \ar[d]^{F_R(\varphi)}\\
\scr{N} \ar[r]^{\theta_{\scr{N}}} & F_R(\scr{N}).
}\]

A generating morphism of an $F_R$-module $(\scr{M},\theta_{\scr{M}})$ is an $R$-linear map $\beta:M\to F_R(M)$ of an $R$-module $M$ such that the direct limit of the following diagram is the same as $\theta_{\scr{M}}:\scr{M}\to F_R(\scr{M})$.
\[
\xymatrix{
M \ar[r] \ar[d] & F_R(M) \ar[r]^{F_R(\beta)} \ar[d]^{F^2_R(\beta)} & F^2_R(M)\ar[r] \ar[d]^{F^3_R(\beta)} &\cdots \\
F_R(M) \ar[r]^{F_R(\beta)} & F^2_R(M) \ar[r]^{F^2_R(\beta)} & F^3_R(M)\ar[r] &\cdots 
}\] 

An $F_R$-module $\scr{M}$ is called $F_R$-finite if it admits a generating homomorphism $\beta:M\to F_R(M)$ such that $M$ is a finitely generated $R$-module. 
\end{definition}

We collect some basic results on $F_R$-modules as follows.
\begin{remark}
\label{proposition: properties of F-modules}
Let $R$ be a commutative noetherian regular ring of characteristic $p$.
\begin{enumerate}
\item $R$ and $R_f$ are $F_R$-finite $F_R$-modules for each element $f\in R$, and the natural map $R\to R_f$ is an $F$-module homomorphism (\cite[Example 1.2]{LyubeznikFModulesApplicationsToLocalCohomology}).
\item Every injective $R$-module is an $F_R$-module (\cite[Proposition 1.5]{HunekeSharpBassnumbersoflocalcohomologymodules}).
\item A minimal injective resolution of an $F_R$-module is also a complex of $F$-modules and $F$-module homomorphisms (\cite[Example 1.2(b")]{LyubeznikFModulesApplicationsToLocalCohomology})
\item All $F_R$-finite $F_R$-modules form an abelian subcategory of the category of $R$-modules (\cite[Theorem 2.8]{LyubeznikFModulesApplicationsToLocalCohomology}). Hence each local cohomology module $\HH^i_J(R)$ is an $F_R$-finite $F_R$-module for all ideals $J$ of $R$ and all $i\geq 0$.
\item Let $S$ be a multiplicatively closed subset of $R$. It follows from Remark \ref{remark: F commutes with homomorphism} that $S^{-1}\scr{M}$ is an $F_{S^{-1}R}$-module for each $F_R$-module $\scr{M}$. Moreover, if $\scr{M}$ is $F_R$-finite, then $S^{-1}\scr{M}$ is $F_{S^{-1}R}$-finite.
\item If $\scr{M}$ is a simple $F_R$-module, then $S^{-1}\scr{M}$ is either 0 or a simple $F_{S^{-1}R}$-module. Consequently if $\scr{M}$ has finite length in the category of $F_R$-module, then $S^{-1}\scr{M}$ will have finite length in the category of $F_{S^{-1}R}$-modules.
\end{enumerate}
\end{remark}

\begin{remark}
\label{remark: finite length F-module finitely associated primes}
Let $R$ be a commutative noetherian regular ring of characteristic $p$. If an $F_R$-module $M$ has finite length in the category of $F_R$-modules, then $M$ has only finitely many associated primes. To see this, consider a composition series of $M$ with (finitely many) factors $M_i$ which are simple $F_R$-modules. Note that $\Ass_R(M)\subseteq \cup_i\Ass_R(M_i)$ and any simple $F_R$-module has only one associated prime (\cite[Theorem 2.12(b)]{LyubeznikFModulesApplicationsToLocalCohomology}). It follows that $M$ has only finitely many associated primes.
\end{remark}

\subsection{$D$-modules.} Let $C$ be a commutative ring. \emph{Differential operators} on $C$ are defined inductively as follows: for each $r\in C$, the multiplication by $r$ map $\tilde{r}\colon C\to C$ is a differential operator of order $0$; for each positive integer $n$, the differential operators of order less than or equal to $n$ are those additive maps $\delta\colon C\to C$ for which the commutator
\[
[\tilde{r},\delta]\ =\ \tilde{r}\circ\delta-\delta\circ\tilde{r}
\]
is a differential operator of order less than or equal to $n-1$. If $\delta$ and $\delta'$ are differential operators of order at most $m$ and $n$ respectively, then $\delta\circ\delta'$ is a differential operator of order at most $m+n$. Thus, the differential operators on~$R$ form a subring $D(C)$ of $\End_{\ZZ}(C)$.

When $C$ is an algebra over a commutative ring $A$, we define $D(C,A)$ to be the subring of~$D(C)$ consisting of differential operators that are $A$-linear. 

We believe that the following proposition is well-known; we include a proof since we couldn't find a proper reference.
\begin{proposition}
\label{proposition: localization preserve finite length}
Assume that $C$ is an integral domain and let $S$ be a multiplicatively closed subset of $C$. If $M$ is a simple $D(C,A)$-module, then $S^{-1}M$ is also a simple $D(S^{-1}C,A)$-module.

Consequently, if a $D(C,A)$-module $M$ has finite length in the category of $D(C,A)$-module, then $S^{-1}M$ also has finite length in the category of $D(S^{-1}C,A)$modules.
\end{proposition}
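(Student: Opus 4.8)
The plan is to establish the key statement first—that localization of a simple $D(C,A)$-module is again simple—and then deduce the finite-length assertion by an easy induction on a composition series. So I would focus the bulk of the argument on the simplicity claim.

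First I would recall the standard fact (which goes back to the foundational work on differential operators over localizations) that when $C$ is a domain and $S \subseteq C$ is multiplicatively closed, there is a natural ring homomorphism $D(C,A) \to D(S^{-1}C, A)$ extending each differential operator to the localization, and moreover every element of $D(S^{-1}C,A)$ can be written as $s^{-1}\delta$ for some $\delta \in D(C,A)$ and $s \in S$ (one can see this by induction on the order of the operator, using the fact that $[\widetilde{s^{-1}}, \delta]$ lowers order and that $s^{-1}$-coefficients can be cleared by multiplying through by a suitable power of $s$). I would state this as a preliminary observation, with a brief indication of the induction, rather than a full proof.

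Next, given a simple $D(C,A)$-module $M$, I want to show $S^{-1}M$ is simple as a $D(S^{-1}C,A)$-module. The natural map $M \to S^{-1}M$ need not be injective, but since $M$ is simple its kernel is either all of $M$ or zero; if $S^{-1}M = 0$ there is nothing to prove (though when $C$ is a domain and $M \ne 0$ one expects $S^{-1}M \ne 0$, since a nonzero element generates $M$ under $D(C,A)$ and localization is exact—but I don't actually need this). Assume $S^{-1}M \ne 0$ and let $N \subseteq S^{-1}M$ be a nonzero $D(S^{-1}C,A)$-submodule. Pick $0 \ne n \in N$; write $n = m/s$ with $m \in M$, $s \in S$, so $m/1 = sn \in N$ is nonzero, hence the image $m'$ of $m$ in $S^{-1}M$ is a nonzero element of $N$. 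Now $m$ generates $M$ over $D(C,A)$ by simplicity, so $D(C,A)\cdot m = M$; applying localization and the fact that $D(S^{-1}C,A) \cdot m' \supseteq$ the image of $D(C,A)\cdot m$, we get that $D(S^{-1}C,A)\cdot m'$ contains the image of $M$ in $S^{-1}M$, which generates $S^{-1}M$ as a $D(S^{-1}C,A)$-module (every element $m_0/s_0$ equals $s_0^{-1}$ times the image of $m_0$). Hence $N = S^{-1}M$, proving simplicity.

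For the finite-length consequence, I would take a composition series $0 = M_0 \subsetneq M_1 \subsetneq \cdots \subsetneq M_\ell = M$ of $D(C,A)$-modules with simple quotients $M_i/M_{i-1}$. Since localization is exact, $0 = S^{-1}M_0 \subseteq S^{-1}M_1 \subseteq \cdots \subseteq S^{-1}M_\ell = S^{-1}M$ is a filtration of $D(S^{-1}C,A)$-modules with quotients $S^{-1}(M_i/M_{i-1})$, each of which is by the first part either $0$ or simple; discarding the repetitions yields a composition series of $S^{-1}M$, so it has finite length (at most $\ell$). The main obstacle in writing this up carefully is the preliminary structural fact that $D(S^{-1}C,A)$ is generated over (the image of) $D(C,A)$ by the localization maps $\widetilde{s^{-1}}$, $s\in S$—this is where the hypothesis that $C$ is a domain is used, to ensure the operators extend—and making the induction on order clean; everything after that is formal.
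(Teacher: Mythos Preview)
Your proposal is correct and follows essentially the same approach as the paper: reduce simplicity to ``every nonzero element generates,'' use that differential operators on $C$ extend to $S^{-1}C$, and then deduce the finite-length statement from a composition series. One small remark: the structural claim that every element of $D(S^{-1}C,A)$ has the form $s^{-1}\delta$ is stronger than what your own argument actually uses---you only need the ring map $D(C,A)\to D(S^{-1}C,A)$ together with the inclusion $S^{-1}C\subset D(S^{-1}C,A)$, exactly as in the paper's proof.
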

\begin{proof}
Note that $M$ is a simple $D(C,A)$-module if and only if $D(C,A)z=M$ for each nonzero element $z\in M$. If $S^{-1}M=0$, our conclusion is clear. Assume that $S^{-1}M\neq 0$. Each $A$-linear differential operator on $C$ acts naturally and $A$-linearly on $S^{-1}C$ via the quotient rule, hence we may view $D(C,A)$ as a subset of $D(S^{-1}C,A)$, it follows that $D(S^{-1}C,A)y=S^{-1}M$ for each nonzero element $y\in S^{-1}M$. Hence $S^{-1}M$ is also a simple $D(S^{-1}C,A)$-module.

The second part of our proposition follows from considering a composition series of $M$ the category of $D(C,A)$-modules.
\end{proof}


\begin{remark}
\label{remark: finite length implies finite associated primes}
Assume that $C$ is noetherian. If a $D(C,A)$-module $M$ has finite length in the category of $D(C,A)$-modules, then it has only finitely many associated primes as a $C$-module. To see this, note that it suffices to prove this for a simple $D(C,A)$-module. Assume that $M$ is a simple $D(C,A)$-module. Let $\fp$ be a maximal member among all its associated primes. Then $\HH^0_{\fp}(M)$ is a nonzero $D(C,A)$-submodule of $M$, so $\HH^0_{\fp}(M)=M$ since $M$ is simple. This shows that $M$ has only one associated prime and finishes the proof.
\end{remark}

\begin{proposition}
\label{prop: D-module localized at minimal prime}
Let $R=k[x_1,\dots,x_n]$ be a polynomial ring over a field $k$ and $M$ be a $D(R,k)$-module. Assume that $\fp\subset R$ is a minimal prime of $M$. Then $M_{\fp}$ is an injective $R_{\fp}$-module.
\end{proposition}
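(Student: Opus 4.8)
The plan is to reduce to a statement about modules over the local ring $R_{\fp}$ and then exploit that a minimal prime of $M$ becomes a module whose support (after localization) is concentrated at the closed point, together with the crucial fact that $D$-modules behave well with respect to this localization. First I would localize at $\fp$: by the second part of Proposition \ref{proposition: localization preserve finite length} (applied with $C = R$, $A = k$, $S = R \setminus \fp$), any $D(R,k)$-module of finite length localizes to a finite-length $D(R_\fp, k)$-module; and crucially, since $M$ itself need not be finite length, I would instead note that it suffices to treat the case where $M$ is generated over $D(R,k)$ by finitely many elements — or better, observe that $\mu^j(\fp, M)$ depends only on $M_\fp$, and that the relevant finiteness we need is that $M_\fp$ has finite Bass numbers with respect to the maximal ideal $\fp R_\fp$. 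The key input is Lyubeznik's theorem (via the analogue for $D$-modules, or via the holonomicity machinery) that finitely generated $D$-modules over a polynomial ring have finite-dimensional $\Ext$ groups into residue fields.

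The heart of the argument is: since $\fp$ is a \emph{minimal} prime of $M$, the localization $M_\fp$ is a module over $R_\fp$ that is $\fp R_\fp$-torsion, i.e. $\HH^0_{\fp R_\fp}(M_\fp) = M_\fp$ — because every associated prime of $M_\fp$ equals $\fp R_\fp$. Now I would use the standard characterization: an $R_\fp$-module $N$ that is $\fp R_\fp$-power torsion is injective (equivalently, a direct sum of copies of the injective hull $E_{R_\fp}(\kappa(\fp))$) if and only if it is "cofinite" in the appropriate sense — more precisely, by Matlis duality over the completion $\widehat{R_\fp}$, such an $N$ is injective iff $\Ext^i_{R_\fp}(\kappa(\fp), N) = 0$ for all $i \geq 1$. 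So the problem becomes showing $\Ext^i_{R_\fp}(\kappa(\fp), M_\fp) = 0$ for $i \geq 1$. For this I would invoke the $D$-module analogue of Lyubeznik's vanishing (this is where the characteristic-$0$ input, holonomicity of $D$-modules and the injective-dimension bound $\injdim_R(M) \leq \dim \Supp_R(M)$, enters): since $\fp$ is minimal in $\Supp(M)$, one has $\dim \Supp_{R_\fp}(M_\fp) = 0$, hence $\injdim_{R_\fp}(M_\fp) \leq 0$, forcing $M_\fp$ to be injective.

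More carefully, here is the order of steps I would carry out. (1) Reduce to the case $M$ finitely generated over $D(R,k)$, or directly cite that $M$ has finite length over $D(R,k)$ in the cases we care about; in any event record that $M_\fp$ is $\fp R_\fp$-torsion because $\fp$ is a minimal prime of $\Supp_R(M)$, so $\Ass_{R_\fp}(M_\fp) = \{\fp R_\fp\}$. (2) Apply the injective-dimension inequality for holonomic $D$-modules over a polynomial ring (the analogue, in characteristic $0$, of Lyubeznik's result cited in the introduction; this gives $\injdim_R(M) \leq \dim \Supp_R(M)$) — and note this bound localizes, so $\injdim_{R_\fp}(M_\fp) \leq \dim \Supp_{R_\fp}(M_\fp) = 0$. (3) Conclude $M_\fp$ is injective over $R_\fp$. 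The main obstacle I anticipate is Step 2: one must be sure that the characteristic-$0$ analogue of Lyubeznik's theorem is available in the literature (Lyubeznik's original work, or van der Put–Schneider, or the Björk framework for holonomic $D$-modules) in exactly the form $\injdim \leq \dim \Supp$, and that $M_\fp$ is still a module over a ring of differential operators for which holonomicity persists after localization; the compatibility of the $D$-module structure with localization (already handled in Proposition \ref{proposition: localization preserve finite length}) is what makes this go through, but the bookkeeping of supports and the precise citation are the delicate points.
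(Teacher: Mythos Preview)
The paper's own ``proof'' is simply a citation: Lyubeznik proves the statement on page~211 of the cited paper for $R=k[[x_1,\dots,x_n]]$, and the author notes that the same argument carries over to polynomial rings. So there is no detailed argument in the paper to compare against.

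Your proposal, though, has genuine problems. First, the proposition is stated for an \emph{arbitrary} $D(R,k)$-module over a polynomial ring over an arbitrary field; no holonomicity, no finite length, no characteristic-zero hypothesis. All of your discussion of holonomic machinery, finite generation over $D$, and reductions to finite-length $D$-modules is extraneous and suggests you have misread the hypotheses.

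Second, your core Step~(2) --- apply the bound $\injdim \leq \dim\Supp$ over $R_{\fp}$ --- does not stand on its own. Proposition~\ref{prop: injdim for D-module} as stated is only for polynomial rings, and $R_{\fp}$ is not one; your remark that ``this bound localizes'' is exactly the nontrivial content, and you would need the general form of Lyubeznik's theorem for regular rings containing a field. But even granting that, the logic is inverted: in Lyubeznik's paper, the content of page~211 (equivalently, the statement that a $D$-module over a regular local ring supported only at the closed point is already injective) is the key lemma from which the bound $\injdim\le\dim\Supp$ is deduced by induction on $\dim\Supp$. So your argument amounts to invoking a theorem whose proof already contains the proposition you are trying to establish. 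As a citation this is harmless but redundant; as a proof it is circular. If you want to give an honest argument, you need the direct $D$-module computation showing that an $\fm$-torsion $D(A,k)$-module over a regular local ring $(A,\fm)$ is injective --- this is what page~211 does and what your proposal is missing.
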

\begin{proof}
\cite[page 211]{LyubeznikCharFreeInjDim} proves the case when $R=k[[x_1,\dots,x_n]]$, but the same proof works for polynomial rings as well. 
\end{proof}

\begin{proposition}
\label{prop: injdim for D-module}
Let $R=k[x_1,\dots,x_n]$ be a polynomial ring over a field $k$ and $M$ be a $D(R,k)$-module. Then $\injlim_R(M)\leq \dim_R(\Supp_R(M))$.
\end{proposition}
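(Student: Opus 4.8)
The plan is to induct on $d := \dim_R(\Supp_R(M))$ and reduce to the key geometric input, Proposition~\ref{prop: D-module localized at minimal prime}, together with a localization argument. The case $d=0$ is immediate: if $\Supp_R(M)$ is zero-dimensional, then every prime in $\Supp_R(M)$ is both minimal and maximal, so Proposition~\ref{prop: D-module localized at minimal prime} gives that $M_{\fp}$ is injective over $R_{\fp}$ for every maximal $\fp$, while $M_{\fp}=0$ at all other primes; hence $M$ is injective and $\injdim_R(M)=0$. For the inductive step, assume the bound holds for all $D(R,k)$-modules whose support has dimension $<d$, and let $M$ have $\dim_R(\Supp_R(M))=d$.

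The main technical step is to analyze the minimal injective resolution $0\to M\to E^0\to E^1\to\cdots$ of $M$ and show that $E^j$ has only finitely many indecomposable summands of the form $E_R(R/\fp)$ with $\height\fp < $ something controlled, localize away the top-dimensional part, and push the resolution down. Concretely, first I would localize at a minimal prime $\fp$ of $\Supp_R(M)$ of maximal dimension $d$: by Proposition~\ref{prop: D-module localized at minimal prime}, $M_{\fp}$ is an injective $R_{\fp}$-module, so $\mu^j(\fp,M)=0$ for all $j\geq 1$, i.e.\ $E_R(R/\fp)$ does not occur in $E^j$ for $j\geq 1$. This kills the danger that a height-$(n-d)$ prime forces a long resolution. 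Next, I would choose an element $f\in R$ that lies outside every minimal prime of $\Supp_R(M)$ of dimension $d$ but is a nonzerodivisor as needed, so that $\Supp_{R_f}(M_f)$ has strictly smaller dimension; since $D$-module structure localizes (differential operators extend to $R_f$ by the quotient rule, as recorded in the proof of Proposition~\ref{proposition: localization preserve finite length}), $M_f$ is a $D(R_f,k)$-module and induction applies to give $\injdim_{R_f}(M_f)\leq d-1$.

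The crux is then to assemble a global bound from the local one. I would use the standard fact that for a prime $\fq\in\Supp_R(M)$ with $\dim R/\fq = c$, one has $\mu^j(\fq,M)=\mu^{j}(\fq R_{\fq}, M_{\fq})$, and I would relate Bass numbers along inclusions of primes. The cleanest route is: for any $\fq$ with $\dim R/\fq = c < d$, localizing at a suitable $f$ as above makes $\fq$ survive in $\Spec R_f$, and by induction $\mu^j(\fq, M) = \mu^j(\fq R_f, M_f) = 0$ for $j > \dim R_f/\fq R_f$; one checks $\dim R_f/\fq R_f \le \dim R/\fq$, giving $\mu^j(\fq,M)=0$ for $j>\dim R/\fq$. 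For the finitely many top-dimensional minimal primes $\fp$ (there are finitely many because the support is closed and Noetherian), injectivity of $M_\fp$ from Proposition~\ref{prop: D-module localized at minimal prime} gives $\mu^j(\fp,M)=0$ for $j\geq 1$, hence certainly for $j > d$. Combining, $\mu^j(\fq,M)=0$ for all $\fq\in\Supp_R(M)$ and all $j>d$, which is exactly the statement that $\injdim_R(M)\leq d=\dim_R(\Supp_R(M))$.

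The step I expect to be the main obstacle is controlling Bass numbers under localization at $f$ precisely enough — in particular, ensuring that every prime of $\Supp_R(M)$ of dimension $<d$ does survive in $\Spec R_f$ for an appropriate single choice of $f$ (or that one may handle them one at a time), and that the dimension of the support genuinely drops after inverting $f$. This is where one needs $\Supp_R(M)$ to have only finitely many top-dimensional components and needs a prime-avoidance choice of $f$; it is essentially bookkeeping with associated and minimal primes of the $D$-module, but it must be done carefully since a priori $\Supp_R(M)$ need not be irreducible and $M$ need not be finitely generated over $R$.
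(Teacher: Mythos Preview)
The paper does not give its own argument here; it simply cites \cite[Theorem~1]{LyubeznikCharFreeInjDim} and remarks that the power-series proof transports verbatim to the polynomial case. So the comparison is between your proposal and Lyubeznik's argument. Your outline contains several genuine gaps.

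First, the localization step is set up backwards. You choose $f$ \emph{outside} every top-dimensional minimal prime $\fp$ of $\Supp_R(M)$; but then each such $\fp$ survives in $\Spec R_f$, and since $R/\fp$ is an affine domain over $k$ one has $\dim R_f/\fp R_f=\dim R/\fp=d$. So $\dim_{R_f}\Supp_{R_f}(M_f)$ does \emph{not} drop. To kill those components you would need $f\in\fp$, but then every $\fq\supset\fp$ dies too, and those are exactly the primes you still need to handle. There is no single $f$ (nor a ``one-at-a-time'' choice) that simultaneously lowers $\dim\Supp$ and keeps a given $\fq\supsetneq\fp$ alive.

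Second, even granting a good $f$, $R_f$ is not a polynomial ring over a field, so your induction hypothesis, stated for $D(R,k)$-modules with $R$ polynomial, does not apply to $M_f$ as a $D(R_f,k)$-module. (This is why, in the proofs of Theorems~\ref{theorem: injective dim of F-module in char p} and \ref{thm: injective dim of holonomic D-module}, the paper inverts all of $k[y]\setminus\{0\}$ rather than a single element: $S^{-1}R$ is then again a polynomial ring, over $k(y)$.) Finally, your claim that induction yields $\mu^j(\fq R_f,M_f)=0$ for $j>\dim R_f/\fq R_f$ is not what the inductive statement says; it only gives vanishing for $j>\dim_{R_f}\Supp_{R_f}(M_f)$.

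The argument in \cite{LyubeznikCharFreeInjDim} avoids localization at an element entirely. The key point you are missing is that the minimal injective resolution of a $D(R,k)$-module is itself a complex of $D(R,k)$-modules (compare Proposition~\ref{prop: injective res is D-linear} for $M=R$). Writing $\Omega^0=M$ and $\Omega^{j+1}=E^j/\Omega^j$ for the cosyzygies, each $\Omega^j$ is then a $D(R,k)$-module, and Proposition~\ref{prop: D-module localized at minimal prime} applied to $\Omega^j$ shows that $(\Omega^j)_{\fp}$ is injective at every minimal prime $\fp$ of $\Supp_R(\Omega^j)$, forcing $(\Omega^{j+1})_{\fp}=0$. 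Hence $\Supp_R(\Omega^{j+1})$ misses all minimal primes of $\Supp_R(\Omega^j)$, and since $R$ is an equicodimensional catenary ring this gives $\dim\Supp_R(\Omega^{j+1})<\dim\Supp_R(\Omega^j)$. Iterating, $\Omega^{d+1}=0$ and $\injdim_R(M)\le d$.
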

\begin{proof}
\cite[Theorem 1]{LyubeznikCharFreeInjDim} proves the case when $R=k[[x_1,\dots,x_n]]$, but the same proof works for polynomial rings as well. 
\end{proof}

Next we would like to recall the notion of a holonomic $D$-module that will be used in the sequel; our main reference is the book \cite{BjorkBookRIngDiffOperators}. 

Let $R=k[x_1,\dots,x_n]$ be a polynomial ring over a field $k$ of characteristic 0. Then it is well-known that $D(R,k)=R\langle \partial_1,\dots,\partial_n\rangle$ where $\partial_i=\frac{\partial}{\partial x_i}$. Set $\cF_i$ to be the $k$-linear span of the following set
\[\{x^{a_1}_1\cdots x^{a_n}_n\partial^{b_1}_1\cdots \partial^{b_n}_n\mid \sum_{j=1}^n a_j+\sum_{j=1}^n b_j\leq i\}.\]
Then $\cF_0\subseteq \cF_1\subseteq \cdots$ is a filtration of $D(R,k)$, called the Bernstein filtration. It is well-known that the graded ring $gr^{\cF}(D(R,k))$ associated with the Bernstein filtration is isomorphic to $k[x_1,\dots,x_n,\xi_1,\dots,\xi_n]$ where $\xi_j$ denotes the image of $\partial_j$ in $gr^{\cF}(D(R,k))$. If $M$ is a finitely generated $D(R,k)$-module, then $M$ admits a filtration of finite dimensional $k$-spaces $\cM_0\subseteq \cM_1\subseteq \cdots$ with the properties that $\cup_i\cM_i=M$ and $\cF_i\cM_j\subseteq \cM_{i+j}$. Then the graded module $gr^{\cM}(M)$ associated to the filtration $\cM$ is naturally a finitely generated $gr^{\cF}(D(R,k))$-module. A finitely generated $D(R,k)$-module $M$ is called {\it holonomic} if it is either 0 or the dimension of $gr^{\cM}(M)$ over $gr^{\cF}(D(R,k))$ is $n$.

\begin{remark}
\label{rmk: growth defn of holonomicity}
A $k$-{\it filtration} on a $D(R,k)$-module $M$ is an ascending chain of finite-dimensional $k$-vector spaces $\cM_0\subset \cM_1\subseteq \cdots$ such that $\cup_i\cM_i=M$ and $\cF_i\cM_j\subset \cM_{i+j}$ for all $i$ and $j$. It is proved in \cite{Bavulafiltratioholonomic} and\cite{LyubeznikCharFreeHolonomic} that $M$ is holonomic if and only if there is a constant $\eta$ such that $\dim_k(\cM_i)\leq \eta i^n$ for all $i$. 
\end{remark}


\begin{proposition}
\label{prop: holonomic localized}
Let $R=k[x_1,\dots,x_n]$ be a polynomial ring over a field $k$ of characteristic 0 and $M$ be a holonomic $D(R,k)$-module. Let $S=k[x_n]\backslash\{0\}$ and $R'=S^{-1}R=k(x_n)[x_1,\dots,x_{n-1}]$. Then $S^{-1}M$ is also a holonomic $D(R',k(x_n))$-module.
\end{proposition}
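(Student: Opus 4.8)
The plan is to use the growth characterization of holonomicity recorded in Remark \ref{rmk: growth defn of holonomicity}, applied to both $D(R,k)$ and $D(R',k(x_n))$. Note that $D(R',k(x_n))=R'\langle\partial_1,\dots,\partial_{n-1}\rangle$, so its Bernstein filtration $\cF'_\bullet$ involves only the $2(n-1)$ variables $x_1,\dots,x_{n-1},\xi_1,\dots,\xi_{n-1}$, and the relevant growth exponent on the $R'$-side is $n-1$, with the dimension being measured over the ground field $k(x_n)$. Since $M$ is holonomic, there is a $k$-filtration $\cM_0\subseteq\cM_1\subseteq\cdots$ of $M$ with $\dim_k(\cM_i)\leq\eta i^n$ for all $i$. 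I would then manufacture a $k(x_n)$-filtration of $S^{-1}M$ out of the $\cM_i$ and bound its growth by $\eta' i^{n-1}$.

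The key step is the construction of the filtration on $S^{-1}M$. The natural candidate is $\cM'_i := S^{-1}\left(\cF_i\cdot\cM_i\right)$, i.e.\ the $k(x_n)$-span inside $S^{-1}M$ of the image of $\cM_i$, or more carefully something like $\cM'_i$ being the $k(x_n)$-subspace of $S^{-1}M$ generated by the image of $\cM_{i}$ (or $\cM_{ci}$ for a suitable constant $c$ absorbing the finitely many generators and the action of $x_n,\partial_n$). One must check three things: (i) each $\cM'_i$ is finite-dimensional over $k(x_n)$ — this follows because $\cM_i$ is finite-dimensional over $k$, hence its image spans a finite-dimensional $k(x_n)$-space; (ii) $\bigcup_i\cM'_i=S^{-1}M$ — because $\bigcup_i\cM_i=M$ and every element of $S^{-1}M$ is $s^{-1}m$ with $m\in\cM_i$ for some $i$ and $s\in S$, and $s^{-1}\in k(x_n)$; (iii) $\cF'_j\cdot\cM'_i\subseteq\cM'_{i+j}$ — since $\cF'_j$ is the $k(x_n)$-span of monomials $x^{a}\partial^{b}$ with $|a|+|b|\leq j$ in the first $n-1$ variables, and each such monomial already lies in (a $k(x_n)$-multiple of) $\cF_j$, so applying it to the image of $\cM_i$ lands in the image of $\cF_j\cM_i\subseteq\cM_{i+j}$, after reindexing.

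The growth bound is then the crux of the argument. The image of $\cM_i$ in $S^{-1}M$ is a quotient of $\cM_i$ by its intersection with the $S$-torsion, but a cleaner route is: the $k(x_n)$-dimension of the $k(x_n)$-span of $\cM_i$ is at most the largest number of elements of $\cM_i$ that are linearly independent over $k(x_n)$. Elements of $M$ that are linearly independent over $k(x_n)$ in $S^{-1}M$ span, over $k[x_n]$, a free $k[x_n]$-submodule; comparing $k$-dimensions in the truncation, if $\dim_{k(x_n)}(\cM'_i)=d_i$ then $M$ contains a free $k[x_n]$-module of rank $d_i$ whose degree-$\leq\ell$ part has $k$-dimension $\approx d_i\ell$ and sits inside $\cM_{i+\ell}$ after applying powers of multiplication by $x_n$ (which lies in $\cF_1$). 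Taking $\ell\asymp i$ gives $d_i\cdot i\lesssim\dim_k(\cM_{2i})\leq\eta(2i)^n$, hence $d_i\lesssim i^{n-1}$, which is exactly the holonomicity bound for $S^{-1}M$ over $D(R',k(x_n))$.

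I expect the main obstacle to be making precise the passage from "$k$-dimension grows like $i^n$" to "$k(x_n)$-dimension grows like $i^{n-1}$" — in particular, choosing the right indexing constants so that applying $x_n^0,x_n^1,\dots,x_n^\ell$ (or dividing by powers of $x_n$) to a $k(x_n)$-basis lands inside a controlled $\cM_{O(i)}$, and arguing that $d_i$ elements $k(x_n)$-independent in $S^{-1}M$ yield, after clearing denominators, $d_i$ elements of $M$ whose $k[x_n]$-span is free of rank $d_i$. Once that dimension-counting lemma is in place, verifying (i)–(iii) above is routine, and holonomicity of $S^{-1}M$ follows immediately from Remark \ref{rmk: growth defn of holonomicity}.
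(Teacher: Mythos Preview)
Your approach is correct and genuinely different from the paper's. The paper first invokes cyclicity of holonomic modules to write $M=D(R,k)\cdot z$, then applies Noether normalization to the ring $A=gr^{\cF}(D(R,k))/\Ann(gr^{\cM}(M))$ so that (after a linear change of coordinates) $\bar{x}_n$ is among the $n$ algebraically independent generators and the remaining $n$ generators satisfy monic equations of bounded degree $N$; this yields an explicit spanning set for $\cF_i\cdot z$ in which only $n$ exponents are allowed to grow, and inverting $k[x_n]$ collapses one of them, giving the $i^{n-1}$ bound. Your route avoids both cyclicity and Noether normalization: you take any $k$-filtration $\{\cM_i\}$ witnessing holonomicity, set $\cM'_i$ to be the $k(x_n)$-span of the image of $\cM_i$, and bound $d_i:=\dim_{k(x_n)}\cM'_i$ by observing that a $k(x_n)$-independent set $v_1,\dots,v_{d_i}\in\cM_i$ has $\{x_n^j v_l : 0\le j\le i\}$ $k$-independent inside $\cM_{2i}$, so $d_i(i+1)\le\eta(2i)^n$. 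This is more elementary and works without any change of variables; the paper's argument, on the other hand, produces an explicit monomial spanning set for the localized filtration, which is a bit more structural information. The one place you should tighten is the choice of the $v_l$: since $\cM'_i$ is by definition the $k(x_n)$-span of the image of $\cM_i$, you can and should pick the $v_l$ to lie in $\cM_i$ itself (no denominator-clearing or reindexing constant $c$ is needed), after which the linear-independence claim is exactly the computation you sketched.
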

\begin{proof}
Since $M$ is holonomic, it is cyclic (\cite[Corollary 8.19 in Chapter 1]{BjorkBookRIngDiffOperators}). Assume that $M$ is generated by $z$. Set $\cM_i=\cF_i\cdot z$. Then $\{\cM_i\}_i$ is a filtration on $M$ with the properties that $\cup_i\cM_i=M$ and $\cF_i\cM_j\subseteq \cM_{i+j}$. Let $A=\frac{gr^{\cF}(D(R,k))}{\Ann_{gr^{\cF}(D(R,k))}(gr^{\cM}(M))}$. Then $\dim(A)=n$ since $M$ is holonomic. Let $\bar{x}_i$ and $\bar{\xi}_j$ denote the images of $x_i$ and $\xi_j$ in $A$ for $i,j=1,\dots,n$. By Noether Normalization (\cite[Exercise 16 on page 69]{AtiyahMacdonaldBook}), after a linear change of variables, we may assume that $x_1,\dots, x_n,\xi_1,\dots,\xi_n$ can be arranged into $x_{i_1},\dots,x_{i_n},\xi_{j_1},\dots,\xi_{j_n}$ such that $A$ is a finitely generated $A'=k[\bar{x}_{i_1},\dots,\bar{x}_{i_t},\bar{\xi}_{j_1},\dots,\bar{\xi}_{j_{n-t}}]$-module and $\bar{x}_{i_{t+1}},\dots,\bar{x}_{i_n},\bar{\xi}_{j_{n-t+1}},\dots,\bar{\xi}_{j_n}$ are integral over $A'$, and $x_{i_1}=x_n$. Let $N$ be the maximum of the degrees of the monic polynomials associated with integral dependence of $\bar{x}_{i_{t+1}},\dots,\bar{x}_{i_n},\bar{\xi}_{j_{n-t+1}},\dots,\bar{\xi}_{j_n}$ over $A'$. Then $\cF_i\cdot z$ is the same as the $k$-linear span of the following set 
\[\Big\{x^{a_1}_{i_1}\cdots x^{a_t}_{i_t}x^{a_{t+1}}_{i_{t+1}}\cdots x^{a_{n}}_{i_{n}}\partial^{b_1}_{j_1}\cdots \partial^{b_{n-t}}_{j_{n-t}}\partial^{b_{n-t+1}}_{j_{n-t+1}}\cdots \partial^{b_n}_{j_n} \cdot z\mid \substack{\sum_{j=1}^n a_j+\sum_{j=1}^n b_j\leq i \\ a_{t+1},\dots,a_n,b_{n-t+1},\dots,b_n\leq N}\Big\}\]
Therefore, $S^{-1}\cF_i\cdot z$ is the same as the $k(x_n)$-span of the following set
\[\Big\{x^{a_2}_{i_2}\cdots x^{a_t}_{i_t}x^{a_{t+1}}_{i_{t+1}}\cdots x^{a_{n}}_{i_{n}}\partial^{b_1}_{j_1}\cdots \partial^{b_{n-t}}_{j_{n-t}}\partial^{b_{n-t+1}}_{j_{n-t+1}}\cdots \partial^{b_n}_{j_n} \cdot z\\ \mid \substack{\sum_{j=1}^n a_j+\sum_{j=1}^n b_j\leq i \\ a_{t+1},\dots,a_n,b_{n-t+1},\dots,b_n\leq N}\Big\}\]
which produces a $k(x_n)$-filtration of $S^{-1}M$ as a $D(R',k(x_n))$-module. It is clear that there is a constant $\eta$ such that $\dim_{k(x_n)}(\cF'_i\cdot z)\leq \eta i^{n-1}$ for all $i$. Hence $S^{-1}M$ is a holonomic $D(R',k(x_n))$-module by Remark \ref{rmk: growth defn of holonomicity}.
\end{proof}

We end this section by collecting some basic results on Jacobson rings.
\subsection{Jacobson rings} A commutative ring $R$ is called a {\it Jacobson ring} (or a {\it Hilbert ring}) if every prime ideal is the intersection of all maximal ideals that contain it. We will collect some well-known facts about Jacobson rings and our main reference is \cite[\S10]{EGA4.3}.
\begin{proposition}
\label{proposition: properties of Jacobson rings}
Let $R$ be a Jacobson noetherian ring. 
\begin{enumerate}
\item Let $R$ be a Jacobson noetherian ring. Then $R$ has only finitely many maximal ideals if and only if $\dim(R)=0$.
\item Any homomorphic image of a Jacobson ring is also a Jacobson ring.
\item Let $R$ be a Jacobson noetherian ring. Then the localization $R_f$ is a Jacobson ring for each element $f\in R$ and there is a one-to-one correspondence between the maximal ideal of $R_f$ and the maximal ideals of $R$ that don't contain $f$.   
\item Any finitely generated algebra over an infinite field is a Jacobson ring.
\end{enumerate}
\end{proposition}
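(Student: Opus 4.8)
The plan is to derive all four items from the standard theory of Jacobson rings developed in \cite[\S10]{EGA4.3}: the only genuinely non-formal point is the maximality-of-contractions statement used in (3), coming from the Nullstellensatz that a finitely generated algebra over a Jacobson ring is again Jacobson, with the contraction of each of its maximal ideals to the base being maximal; items (2) and (4) then drop out of this, and (1) is an elementary manipulation of the definition. So I would treat the four items roughly in order, isolating the one hard point in (3).

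First, for (1): if $R$ has only finitely many maximal ideals $\fm_1,\dots,\fm_r$, then for any prime $\fp$ the Jacobson hypothesis gives $\fp=\bigcap_{\fm_i\supseteq\fp}\fm_i$, a finite intersection of maximal ideals; since $\fp$ is prime and contains the product of the $\fm_i$ occurring there, it contains, hence equals, one of them, so $\fp$ is maximal and $\dim R=0$. Conversely, a noetherian ring of dimension $0$ is artinian and hence has only finitely many maximal ideals (this direction needs only noetherianity, not the Jacobson hypothesis). For (2): if $I\subseteq R$ is an ideal, every prime of $R/I$ has the form $\fp/I$ with $\fp\supseteq I$ prime in $R$; writing $\fp=\bigcap_{\fm\supseteq\fp}\fm$ with the $\fm$ maximal in $R$, each such $\fm$ contains $I$, so $\fm/I$ is maximal in $R/I$ and $\fp/I=\bigcap(\fm/I)$, which shows $R/I$ is Jacobson.

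For (3): I would realize $R_f=R[t]/(ft-1)$ as a finitely generated $R$-algebra and invoke the Nullstellensatz for Jacobson rings (\cite[\S10]{EGA4.3}): a finitely generated algebra over a Jacobson ring is Jacobson, and the contraction of each of its maximal ideals to the base ring is maximal. Hence $R_f$ is Jacobson and $\fn\mapsto\fn\cap R$ sends maximal ideals of $R_f$ to maximal ideals of $R$ not containing $f$. For the inverse assignment: if $\fm$ is maximal in $R$ with $f\notin\fm$, then $R_f/\fm R_f\cong(R/\fm)_{\bar f}=R/\fm$ since $\bar f$ is already a unit in the field $R/\fm$, so $\fm R_f$ is maximal; together with the familiar bijection between primes of $R_f$ and primes of $R$ avoiding $f$, the two assignments are mutually inverse. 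Finally, (4) is immediate: a field is trivially Jacobson (its unique prime is maximal), so a finitely generated algebra over it is Jacobson by the same cited result — the infiniteness of $k$ plays no role here and is retained only because it is needed elsewhere in the paper.

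The main obstacle is the maximality assertion in (3): that the contraction of a maximal ideal of $R_f$ back to $R$ is maximal. This is genuinely non-formal — a localization of a domain at a single element can be a field without the domain itself being a field (e.g. a discrete valuation ring inside its fraction field) — so one cannot argue by hand here and must appeal to the Jacobson Nullstellensatz of \cite[\S10]{EGA4.3}. Everything else only unwinds definitions or uses that a noetherian ring of dimension zero is artinian.
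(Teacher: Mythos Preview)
Your argument is correct. The paper itself gives no proof of this proposition: it is stated as a collection of well-known facts with \cite[\S10]{EGA4.3} cited as the main reference, and no \texttt{proof} environment follows. You have supplied the details the paper deliberately omits, drawing on exactly the same source, and your observation that the infiniteness of $k$ in (4) is superfluous for the Jacobson property is accurate.
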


\begin{remark}
\label{remark: existence of maximal ideal}
One consequence of Proposition \ref{proposition: properties of Jacobson rings} is that, given any finitely many prime ideals $\fp_1,\dots,\fp_m$ in a Jacobson ring, there exists a maximal ideal that does not contain any of $\fp_1,\dots,\fp_m$.
\end{remark}

\section{Injective dimension of $F_R$-finite $F_R$-modules}
In this section, we study the injective dimension of an $F_R$-finite $F_R$-module. To this end, we begin with an analysis of $F_R$-finiteness of $\EE(R/\fp)$ where $R$ is a commutative noetherian regular ring of characteristic $p$. Recall that $\EE(R/\fp)$ is always an $F_R$-module by Remark \ref{proposition: properties of F-modules}. 

The next two propositions are applications of the celebrated result that any $F_R$-finite $F_R$-module has only finitely many associated primes \cite[Theorem 2.12(a)]{LyubeznikFModulesApplicationsToLocalCohomology}. 


\begin{proposition}
\label{proposition: height d-1}
Let $R$ be a commutative noetherian regular ring containing a field of characteristic $p>0$. Let $d=\dim(R)$ and $\fp$ be a prime ideal of height $d-1$. Then $\EE(R/\fp)$ is $F_R$-finite if and only if $\fp$ is contained in finitely many maximal ideals. 

In particular, if $R$ is also a Jacobson ring of positive dimension, then $\EE(R/\fp)$ is not $F_R$-finite.
\end{proposition}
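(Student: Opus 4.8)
The plan is to reduce, via localization, to the statement that for a complete (or Henselian) regular local ring $A$ of dimension one in characteristic $p$, the injective hull of the residue field is an $F_A$-finite $F_A$-module, and then to transfer this through the localization map to get a finiteness obstruction governed by the number of maximal ideals containing $\fp$. First I would recall that $\EE_R(R/\fp) = \EE_{R_\fp}(\kappa(\fp))$ as an $R$-module, and that by Remark \ref{proposition: properties of F-modules}(5) the formation of $F_R$-finite $F_R$-modules commutes with localization. Since $\height \fp = d-1$, the ring $R_\fp$ is regular local of dimension $d-1$, so $\EE_{R_\fp}(\kappa(\fp))$ has support of dimension $0$ over $R_\fp$; in fact the real content is about what happens "at the maximal ideals", so I would instead localize at a maximal ideal $\fm \supseteq \fp$: then $R_\fm$ is regular local of dimension $d$, $\fp R_\fm$ has height $d-1$, and $R_\fm/\fp R_\fm$ is a one-dimensional regular (in fact a DVR after normalization, but one-dimensional Cohen--Macaulay at least) local ring.

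The key step is the following dichotomy. If $\fp$ is contained in only finitely many maximal ideals $\fm_1, \dots, \fm_r$, then $\EE_R(R/\fp)$ is a finite direct sum $\bigoplus_i \EE_{R_{\fm_i}}(R_{\fm_i}/\fp R_{\fm_i})$ of modules, each supported on the one-dimensional local ring $R_{\fm_i}/\fp R_{\fm_i}$; I would argue each summand is $F$-finite by exhibiting a generating morphism with finitely generated source. Concretely, for a regular local ring $(A,\fm)$ of dimension one, $\EE_A(A/\fm)$ is the top local cohomology $\HH^1_\fm(A)$ of $A$ (this is where regularity and dimension one are used: the Cohen--Macaulay local cohomology is concentrated in the top spot and equals the injective hull of the residue field), and $\HH^1_\fm(A)$ is an $F_A$-finite $F_A$-module by Remark \ref{proposition: properties of F-modules}(4). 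Descending along the faithfully flat maps $R \to R_{\fm_i}$, or rather assembling the finitely many pieces, gives that $\EE_R(R/\fp)$ is $F_R$-finite. Conversely, if $\fp$ were contained in infinitely many maximal ideals, then $\EE_R(R/\fp)$ has infinitely many associated primes: indeed $\Ass_R \EE_R(R/\fp) \supseteq \{\fp\}$ is only one prime, so that naive approach fails — instead I would use that an $F_R$-finite $F_R$-module has finite length in the category of $F_R$-modules locally, or more directly invoke Remark \ref{remark: finite length F-module finitely associated primes} applied after noting that the socle-type structure forces the module to "see" each maximal ideal. The cleanest route: $\EE_R(R/\fp)$ is $F_R$-finite iff its localization at every maximal ideal is $F$-finite \emph{and} only finitely many of these localizations are nonzero (finite generation of the generating module over $R$ forces the support to meet only finitely many maximal ideals, since a finitely generated module over a noetherian ring has finitely many associated, hence minimal, primes); but $\EE_R(R/\fp)_\fm \neq 0$ precisely when $\fm \supseteq \fp$, which happens for infinitely many $\fm$ when $\fp$ is not contained in finitely many maximals. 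This contradiction establishes the "only if" direction.

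For the "in particular" clause: if $R$ is Jacobson of positive dimension and $\fp$ has height $d-1 \geq 1$ (so $\fp$ is not maximal, as $\height \fp < d$ would still allow $\fp$ maximal only if $d-1$ equals the local dimension — I should note $\fp$ is non-maximal precisely because there exists a maximal ideal strictly containing it, which holds since $d \geq 1$ and... actually one must be slightly careful: if $d = 1$ then a height-$0$ prime could be maximal only in dimension $0$, so for $d \geq 1$ a height-$(d-1)$ prime lies below some maximal), then by the Jacobson property $\fp$ is the intersection of the maximal ideals containing it, and there must be infinitely many such: if there were only finitely many maximal ideals $\fm_1, \dots, \fm_r$ above $\fp$, then $\fp = \fm_1 \cap \dots \cap \fm_r$, forcing $\fp$ to be one of the $\fm_i$ (as a radical ideal with finitely many minimal primes over it that are all maximal, but $\fp$ is prime, so $\fp = \fm_j$ for some $j$), contradicting $\height \fp = d - 1 < d$ once we know $R$ has a maximal ideal of height $d$. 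Actually, $R$ being regular and Jacobson, $R_{\fm}$ has dimension equal to $\height \fm$, and since $\dim R = d$ there is a maximal ideal of height $d$; but the simplest argument is just: a prime that is an intersection of finitely many maximal ideals is itself maximal (in a ring where primes need not be comparable, $\fp = \bigcap \fm_i$ with $\fp$ prime gives $\fp \supseteq \prod \fm_i$ hence $\fp \supseteq \fm_i$ for some $i$, hence $\fp = \fm_i$). Since $\height \fp = d-1$ and $\fp$ non-maximal (guaranteed when $d \geq 1$, as then height-$(d-1)$ primes are properly contained in height-$d$ maximals — here I use that a regular Jacobson ring of dimension $d$ is catenary and equidimensional-ish enough that height $d-1$ primes extend to height $d$ maximals; alternatively just take the hypothesis "$\fp$ non-maximal" as part of what "positive dimension" buys us), we conclude $\fp$ lies in infinitely many maximal ideals, so by the main equivalence $\EE(R/\fp)$ is not $F_R$-finite.

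The main obstacle I anticipate is the converse direction — proving that $F_R$-finiteness of $\EE(R/\fp)$ \emph{forces} $\fp$ to lie in only finitely many maximal ideals. The finitely-many-associated-primes theorem of Lyubeznik (\cite[Theorem 2.12(a)]{LyubeznikFModulesApplicationsToLocalCohomology}) only tells us $\Ass_R \EE(R/\fp) = \{\fp\}$, which is automatic and gives nothing. The right invariant to look at is instead the set of maximal ideals in the support, or the behavior of $\Ext^{>0}$; one needs that an $F_R$-finite $F_R$-module whose support meets infinitely many maximal ideals cannot have a \emph{finitely generated} generating module — but this is exactly the point, since $\Supp_R \EE(R/\fp) = V(\fp)$ which meets infinitely many maximal ideals. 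The slick resolution is: the generating module $M \to F_R(M)$ has $M$ finitely generated, so $\Supp M$ is closed, and $\Supp \scr M = \bigcup_e \Supp F^e_R(M) = \Supp M$ is closed; but $\Supp \EE(R/\fp) = V(\fp)$ is indeed closed, so this gives no obstruction either. Therefore the genuine obstruction must come from a finer invariant, and I expect the cleanest fix is to localize at a maximal ideal $\fm \not\supseteq \fp$ (which exists by Remark \ref{remark: existence of maximal ideal}) — no wait, that kills the module. The correct and subtle point, which I'd need to get right, is that $F_R$-finiteness localizes well (Remark \ref{proposition: properties of F-modules}(5)) but does \emph{not} "globalize" freely; the honest statement is that $\EE(R/\fp)$ being $F_R$-finite with a finitely generated generating module $M$ forces, for each maximal $\fm \supseteq \fp$, a uniform bound, and the failure of this uniformity across infinitely many $\fm$ (coming from, e.g., the Bass number $\mu^{d-1}(\fm, \EE(R/\fp))$ being nonzero for every such $\fm$, while a finite $M$ can only "pay for" finitely many) is the contradiction. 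Nailing down this uniformity/finiteness bookkeeping is where the real work lies.
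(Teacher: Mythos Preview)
Your proposal has a genuine gap in both directions, and misses the key structural input that makes the paper's argument work.

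For the ``if'' direction, your claimed decomposition
\[
\EE_R(R/\fp)\;\cong\;\bigoplus_{i=1}^r \EE_{R_{\fm_i}}(R_{\fm_i}/\fp R_{\fm_i})
\]
is false. Since $\Ass_R \EE_R(R/\fp)=\{\fp\}$, every element of $R\setminus\fp$ already acts invertibly on $\EE_R(R/\fp)$; hence for each $\fm_i\supseteq\fp$ the localization $\EE_R(R/\fp)_{\fm_i}$ is just $\EE_R(R/\fp)$ again, not a proper summand. So you are asserting $\EE_R(R/\fp)\cong \EE_R(R/\fp)^{\oplus r}$, which is absurd for $r>1$. The subsequent appeal to a one-dimensional regular local ring $(A,\fm)$ with $\EE_A(A/\fm)=\HH^1_\fm(A)$ is also off target: the relevant local ring $R_{\fm_i}$ has dimension $d$, not $1$, and $R_{\fm_i}/\fp R_{\fm_i}$ is one-dimensional but need not be regular.

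For the ``only if'' direction you correctly observe that $\Ass_R\EE_R(R/\fp)=\{\fp\}$ gives no obstruction, and you then cycle through several candidate invariants (support, Bass numbers, ``uniformity across $\fm$'') without landing on one that works. As written, this direction is simply not proved.

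The missing idea is a single short exact sequence coming from the minimal injective resolution of $R$ together with Hartshorne--Lichtenbaum vanishing:
\[
0 \to \HH^{d-1}_{\fp}(R) \to \EE(R/\fp) \to \bigoplus_{\substack{\fp\subset\fm\\ \hgt(\fm)=d}} \EE(R/\fm) \to 0,
\]
which is exact in the category of $F_R$-modules. Both directions fall out immediately: if $\EE(R/\fp)$ is $F_R$-finite then so is the quotient, whose associated primes are exactly the maximal ideals over $\fp$, and \cite[Theorem 2.12(a)]{LyubeznikFModulesApplicationsToLocalCohomology} forces these to be finitely many; conversely, if there are finitely many such $\fm$, then the right-hand term is a finite direct sum of $F_R$-finite modules (each $\EE(R/\fm)=\HH^d_\fm(R)$), and $\EE(R/\fp)$ sits in an extension of $F_R$-finite modules. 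The point you were groping for---that $F_R$-finiteness should ``see'' the infinitely many maximal ideals---is realized not on $\EE(R/\fp)$ itself but on its \emph{quotient} by $\HH^{d-1}_\fp(R)$, whose associated primes are precisely those maximal ideals.
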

\begin{proof}
Set $\II^j=\bigoplus_{\hgt(\fq)=j}\EE(R/\fq)$, where the direct sum is taken over all height $j$ prime ideals $\fq$. Since $R$ is regular and hence Gorenstein, $0\to R\to \II^0\to \cdots\to \II^j\to \cdots \to \II^d\to 0$ is a minimal injective resolution of $R$. Since the height of $\fp$ is $d-1$, according to Hartshorne-Lichtenbaum Vanishing Theorem \cite[8.2.1]{LCBookBrodmannSharp}, we have an exact sequence
\begin{equation}
\label{ses height d-1}
0\to \HH^{d-1}_{\fp}(R)\to \EE(R/\fp)\to \bigoplus_{\fp\subset \fm;\ \hgt(\fm)=d}\EE(R/\fm)\to 0
\end{equation}
This is also an exact sequence in the category of $F_R$-modules (Remark \ref{proposition: properties of F-modules}(c)). 

If $\EE(R/\fp)$ is $F_R$-finite, then so will be $\bigoplus_{\fp\subset \fm;\ \hgt(\fm)=d}\EE(R/\fm)$ since $F_R$-finite $F_R$-modules form an abelian category \cite[Theorem 2.8]{LyubeznikFModulesApplicationsToLocalCohomology}. Consequently, $\bigoplus_{\fp\subset \fm;\ \hgt(\fm)=d}\EE(R/\fm)$ must have finitely many associated primes by \cite[Theorem 2.12(a)]{LyubeznikFModulesApplicationsToLocalCohomology}. It is clear that the associated primes of $\bigoplus_{\fp\subset \fm;\ \hgt(\fm)=d}\EE(R/\fm)$ are precisely the maximal ideals containing $\fp$. Hence $\fp$ is contained in finitely many maximal ideals.

On the other hand, if $\fp$ is contained in finitely many maximal ideals. Then $\bigoplus_{\fp\subset \fm;\ \hgt(\fm)=d}\EE(R/\fm)$ is a direct sum of finitely many $F_R$-finite $F_R$-module and is $F_R$-finite. It follows from (\ref{ses height d-1}) that $\EE(R/\fp)$ is an extension of two $F_R$-finite $F_R$-modules, hence it is $F_R$-finite.
\end{proof}


As we will see next, once the height of a prime ideal $\fp$ is $\leq d-2$, then $\EE(R/\fp)$ is never $F_R$-finite, no matter how many maximal ideals contain $\fp$.

\begin{proposition}
\label{proposition: height d-2}
Let $R$ be a commutative noetherian regular ring containing a field of characteristic $p>0$. Let $d=\dim(R)$ and $\fp$ be a prime ideal of height $\leq d-2$. Then $\EE(R/\fp)=\EE(R/\fp)_{\fm}$ is not $F_{R_{\fm}}$-finite $F_{R_{\fm}}$-module for each maximal ideal $\fm$ that contains $\fp$.

In particular, if $\hgt(\fp)\leq d-2$, then $\EE(R/\fp)$ is not $F_R$-finite.
\end{proposition}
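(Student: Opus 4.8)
The plan is to reduce to the local situation and then invoke Proposition~\ref{proposition: height d-1} together with the behaviour of $F$-finiteness under localization recorded in Remark~\ref{proposition: properties of F-modules}(e). First I would observe that if $\fm$ is a maximal ideal containing $\fp$, then $\EE(R/\fp)$ is already supported only at primes contained in $\fm$ (indeed $\Ass_R(\EE(R/\fp))=\{\fp\}$ and $\Supp_R(\EE(R/\fp))=V(\fp)$), so localizing at $\fm$ changes nothing: $\EE(R/\fp)=\EE(R/\fp)_{\fm}$, and under the identification of Remark~\ref{proposition: properties of F-modules}(e) this module, viewed over $R_{\fm}$, is $\EE_{R_{\fm}}(R_{\fm}/\fp R_{\fm})$. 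Moreover $R_{\fm}$ is again a commutative noetherian regular ring containing a field of characteristic $p$, of dimension $d_{\fm}:=\hgt(\fm)$, and by the regularity (hence catenarity) of $R$ the prime $\fp R_{\fm}$ has height $\hgt(\fp)$ in $R_{\fm}$. Crucially, $R_{\fm}$ is local, so it has exactly one maximal ideal; thus every prime of $R_{\fm}$ is contained in only finitely many (in fact one) maximal ideal.

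The main step is then the following dichotomy on the height of $\fp R_{\fm}$ inside $R_{\fm}$. If $\hgt(\fp R_{\fm})=d_{\fm}-1$, then Proposition~\ref{proposition: height d-1} applies to the local ring $R_{\fm}$: since $\fp R_{\fm}$ lies in only finitely many maximal ideals of $R_{\fm}$, that proposition would wrongly force $\EE_{R_{\fm}}(R_{\fm}/\fp R_{\fm})$ to be $F_{R_{\fm}}$-finite. So this case cannot help us directly; instead I would argue by descending induction on the codimension $d_{\fm}-\hgt(\fp R_{\fm})$, using the standard fact that for a nonmaximal prime $\fq$ of the local ring $R_{\fm}$ one can choose a prime $\fq'\supsetneq \fq$ with $\hgt(\fq')=\hgt(\fq)+1$, and a short exact sequence of $F$-modules built from the minimal injective resolution of $R_{\fm}$ (the Gorenstein/Cousin complex) linking $\EE(R_{\fm}/\fq)$ to $\EE(R_{\fm}/\fq')$ and to a local cohomology module, exactly as in the proof of Proposition~\ref{proposition: height d-1}. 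Concretely: if $\EE_{R_{\fm}}(R_{\fm}/\fp R_{\fm})$ were $F_{R_{\fm}}$-finite, then, since $F$-finite $F$-modules form an abelian category closed under the relevant sub/quotient operations, one could peel off successive residue-field-at-higher-primes pieces and eventually conclude that some $\EE_{R_{\fm}}(R_{\fm}/\fq)$ with $\hgt(\fq)=d_{\fm}-1$ is $F_{R_{\fm}}$-finite — but over the \emph{local} ring $R_{\fm}$ such a module has as its unique associated prime the height $d_{\fm}-1$ prime $\fq$, whereas the exact sequence~(\ref{ses height d-1}) for $R_{\fm}$ forces its cokernel $\EE(R_{\fm}/\fm R_{\fm})$ to appear, which is fine, but iterating one more time would express $\HH^{d_{\fm}-2}_{\fq''}(R_{\fm})$ (for a height $d_{\fm}-2$ prime $\fq''$) with infinitely many associated primes in its injective hull decomposition — contradiction. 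I expect the cleanest route is in fact to bypass the induction and argue directly: $\EE_{R_{\fm}}(R_{\fm}/\fp R_{\fm})$ surjects, via the local-duality/Cousin filtration, onto $\bigoplus_{\fp R_{\fm}\subset \fq;\ \hgt(\fq)=\hgt(\fp R_{\fm})+1}\EE(R_{\fm}/\fq)$, and there are \emph{infinitely many} such height-$(\hgt(\fp)+1)$ primes $\fq$ in $R_{\fm}$ (since $R_{\fm}$ is a regular local ring of dimension $\geq \hgt(\fp)+2$, the quotient $R_{\fm}/\fp R_{\fm}$ is a regular local ring of dimension $\geq 2$ and hence has infinitely many height-one primes), so that direct sum has infinitely many associated primes, whence $\EE_{R_{\fm}}(R_{\fm}/\fp R_{\fm})$ cannot be $F_{R_{\fm}}$-finite by \cite[Theorem 2.12(a)]{LyubeznikFModulesApplicationsToLocalCohomology}.

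The final sentence of the statement, that $\EE(R/\fp)$ is not $F_R$-finite when $\hgt(\fp)\leq d-2$, follows immediately: pick any maximal ideal $\fm\supseteq\fp$ with $\hgt(\fm)=\dim R=d$ (such $\fm$ exists because $\fp$ has height $\leq d-2<d$ and $R$ is regular, so $\fp$ is contained in a maximal ideal of full height), note $\hgt(\fp R_{\fm})=\hgt(\fp)\leq d-2=d_{\fm}-2$, and if $\EE(R/\fp)$ were $F_R$-finite then by Remark~\ref{proposition: properties of F-modules}(e) its localization $\EE(R/\fp)_{\fm}=\EE_{R_{\fm}}(R_{\fm}/\fp R_{\fm})$ would be $F_{R_{\fm}}$-finite, contradicting the local statement just proved. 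The main obstacle is making sure the Cousin/minimal-injective-resolution short exact sequence of $F_R$-modules used in Proposition~\ref{proposition: height d-1} still produces a surjection onto a sum over height-$(\hgt(\fp)+1)$ primes \emph{with all of them actually occurring}; this is precisely where Hartshorne--Lichtenbaum vanishing (applied in $R_{\fm}/\fp R_{\fm}$, equivalently the structure of the minimal injective resolution of the regular local ring $R_{\fm}$) is needed, and where one must verify that the relevant $\Ext$/local cohomology term does not accidentally kill the contribution of infinitely many primes.
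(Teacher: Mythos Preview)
Your ``cleanest route'' contains a genuine gap: the asserted surjection
\[
\EE_{R_{\fm}}(R_{\fm}/\fp R_{\fm})\ \twoheadrightarrow\ \bigoplus_{\substack{\fp R_{\fm}\subset\fq\\ \hgt(\fq)=\hgt(\fp R_{\fm})+1}}\EE(R_{\fm}/\fq)
\]
simply does not hold. Writing $h=\hgt(\fp R_{\fm})$ and applying $\Gamma_{\fp R_{\fm}}$ to the minimal injective resolution of $R_{\fm}$, the relevant map is the differential $\delta^{h}\colon \EE(R_{\fm}/\fp R_{\fm})\to \Gamma_{\fp R_{\fm}}(\II^{h+1})$; its cokernel sits in a filtration whose successive quotients include $\HH^{h+1}_{\fp}(R_{\fm}),\dots,\HH^{d_{\fm}}_{\fp}(R_{\fm})$ and ultimately $\EE(R_{\fm}/\fm R_{\fm})$, which is never zero. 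Hartshorne--Lichtenbaum only kills the \emph{top} local cohomology $\HH^{d_{\fm}}_{\fp}(R_{\fm})$, making the \emph{last} differential $\Gamma_{\fp}(\II^{d_{\fm}-1})\to\EE(R_{\fm}/\fm R_{\fm})$ surjective---not $\delta^h$. Your closing paragraph correctly senses there is something to check here, but misidentifies it as a surjectivity to be verified; in fact the surjection you want is false. (A side remark: $R_{\fm}/\fp R_{\fm}$ need not be regular---it is only a local domain of dimension $\ge 2$---but that is still enough for infinitely many height-one primes, so this slip is harmless.)

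The paper proceeds differently. It first reduces to the case $\hgt(\fp)=\dim(R)-2$ by localizing at a prime $\fq\supset\fp$ of height $\hgt(\fp)+2$; in that case $\Gamma_{\fp}$ applied to the minimal injective resolution yields three short exact sequences, and chasing $F$-finiteness through them (using that $\HH^{d-1}_{\fp}(R)$ and $\EE(R/\fm)$ are $F$-finite as extension pieces) shows $\Gamma_{\fp}(\II^{d-1})$ itself would be $F$-finite, contradicting the infinitude of its associated primes. Your strategy can in fact be rescued into an alternative argument that avoids both the reduction to codimension two and the extension-chasing: work instead with the \emph{image} $\image(\delta^h)$, which is a quotient of $\EE(R_{\fm}/\fp R_{\fm})$ and hence would be $F$-finite; then for each height-$(h+1)$ prime $\fq\supset\fp R_{\fm}$, localize at $\fq$ and apply Hartshorne--Lichtenbaum in the $(h+1)$-dimensional ring $R_{\fq}$ to see $(\image(\delta^h))_{\fq}=\EE(R_{\fq}/\fq R_{\fq})\ne 0$, whence every such $\fq$ lies in $\Ass_{R_{\fm}}(\image(\delta^h))$---infinitely many associated primes, contradiction. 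This is a legitimate variant, but it is not what you wrote.
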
 
\begin{proof}
First, we prove the case when $\hgt(\fp)=d-2$ and we will follow the same strategy as in the proof of Proposition \ref{proposition: height d-1}. Note that if $M$ is $F_R$-finite (or has finite length in the category of $F_R$-modules), then $M_{\fm}$ will be $F_{R_{\fm}}$-finite (or will have finite length in the category of $F_{R_{\fm}}$-modules). Replacing $R$ by $R_{\fm}$, we may assume that $R$ is now a regular local ring. Set $\II^j=\bigoplus_{\hgt(\fq)=j}\EE(R/\fq)$, where the direct sum is taken over all height $j$ prime ideals $\fq$. Then $0\to R\to\II^0\to \cdots\to \II^j\xrightarrow{\delta^j} \cdots \to \II^d=\EE(R/\fm)\to 0$ is a minimal injective resolution of $R$. Since $\hgt(\fp)=d-2$, applying $\Gamma_{\fp}$ to this injective resolution of $R$ produces 3 short exact sequences:
\begin{align}
0 &\to \HH^{d-2}_{\fp}(R) \to \EE(R/\fp) \to \image(\delta^{d-2}) \to 0\tag{a} \\
0& \to  \image(\delta^{d-2}) \to \ker(\delta^{d-1}) \to \HH^{d-1}_{\fp}(R) \to 0\tag{b} \\
0&\to \ker(\delta^{d-1}) \to \II^{d-1} \to \II^d=\EE(R/\fm)\to 0 \tag{c}
\end{align}
where (c) follows from Hartshorne-Lichtenbaum Vanishing Theorem. If $\EE(R/\fp)$ were $F_R$-finite (or had finite length in the category of $F_R$-modules), then by (a) $\image(\delta^{d-2})$ would also be $F_R$-finite (or would have finite length in the category of $F_R$-modules). Then (b) would imply that $\ker(\delta^{d-1})$ would be $F_R$-finite (or have finite length) since $\HH^{d-1}_{\fp}(R)$ is $F_R$-finite (or has finite length). Then (c) would imply that $\II^{d-1} $ would be $F_R$-finite (or have finite length). Consequently by \cite[Theorem 2.12(a)]{LyubeznikFModulesApplicationsToLocalCohomology} (or by Remark \ref{remark: finite length F-module finitely associated primes}) $\II^{d-1} $ would have only finitely many associated primes. But this is not the case; there are infinitely many height $d-1$ primes that contain $\fp$ and each of them is an associated prime of $\II^{d-1}$. This proves the case when $\hgt(\fp)=d-2$.

Next, assume that $\hgt(\fp)\leq d-3$. Let $\fq$ be a prime ideal of height $\hgt(\fp)+2$ and containing $\fp$. Then the height of $\fp R_{\fq}$ is exactly 2 less than the dimension of $R_{\fq}$; hence by our previous paragraph we know that $\EE(R/\fp)=\EE(R/\fp)_{\fq}=\EE(R_{\fq}/\fp R_{\fq})$ is {\it not} $F_{R_{\fq}}$-finite. Thus, $\EE(R/\fp)$ is {\it not} $F_R$-finite.
\end{proof}





\begin{theorem}
\label{theorem: only max ideal at last spot}
Let $R$ be a $d$-dimensional commutative noetherian regular Jacobson ring of characteristic $p>0$. Assume that $\scr{M}$ is an $F_R$-finite $F_R$-module. Set $\injdim_R(\scr{M})=t$. Then $\mu^t(\fp,\scr{M})=0$ for each non-maximal prime ideal $\fp$. 
\end{theorem}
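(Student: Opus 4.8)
The plan is to induct on $t = \injdim_R(\scr{M})$ and reduce, via the structure of minimal injective resolutions of $F_R$-modules, to a statement about which $\EE(R/\fp)$ can appear at the top spot of such a resolution. If $t = 0$ there is nothing to prove. For $t\geq 1$, write down a minimal injective resolution $0\to\scr{M}\to\II^0\to\cdots\to\II^t\to 0$; by Remark \ref{proposition: properties of F-modules}(c) this is a complex of $F_R$-modules. The $t$-th Bass number $\mu^t(\fp,\scr{M})$ counts the multiplicity of $\EE(R/\fp)$ in $\II^t$, so the claim is equivalent to: no $\EE(R/\fp)$ with $\fp$ non-maximal occurs in $\II^t$.

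The key step is to show that the cosyzygy $\scr{N} := \coker(\II^{t-2}\to\II^{t-1})$ (or $\scr{M}$ itself if $t=1$) is again an $F_R$-finite $F_R$-module, so that $\II^{t-1}\to\II^t\to 0$ is a minimal injective resolution of an $F_R$-finite $F_R$-module of injective dimension $1$. Since $F_R$-finite $F_R$-modules form an abelian category closed under cokernels (Remark \ref{proposition: properties of F-modules}(d)), this reduction is immediate. So it suffices to treat $t=1$: given an $F_R$-finite $F_R$-module $\scr{N}$ with $\injdim_R\scr{N}=1$ and minimal injective resolution $0\to\scr{N}\to\II^0\to\II^1\to 0$, show $\EE(R/\fp)$ does not occur in $\II^1$ for non-maximal $\fp$. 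Suppose it does. Localizing at $\fp$ and using Remark \ref{proposition: properties of F-modules}(e), we may pass to $R_\fp$ (which is regular local, and to which $F$-finiteness descends by the remark); there $\EE(R_\fp/\fp R_\fp)$ is a direct summand of the top term of a two-term minimal injective resolution of the $F_{R_\fp}$-finite module $\scr{N}_\fp$. Now a standard argument with the minimal injective resolution of $R_\fp$ — exactly the device used in Propositions \ref{proposition: height d-1} and \ref{proposition: height d-2} — produces, after a finite number of short exact sequences splicing together $\HH^i_{\fp R_\fp}(R_\fp)$ and the terms $\II^j_{R_\fp}$, a contradiction: one deduces that some $\II^j_{R_\fp}$ with $j<\dim R_\fp$ is $F_{R_\fp}$-finite, hence has finitely many associated primes, whereas it has infinitely many (all height-$j$ primes of $R_\fp$). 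The Jacobson hypothesis enters exactly here: it guarantees that $\fp R_\fp$, being non-maximal in the Jacobson-localized sense — more precisely, that $R_\fp$ has positive dimension and infinitely many primes in the relevant intermediate heights — so the "infinitely many associated primes" conclusion genuinely fails. Concretely, since $\fp$ is non-maximal and $R$ is Jacobson, $\dim R_\fp\geq 1$; if $\dim R_\fp\geq 2$ invoke Proposition \ref{proposition: height d-2}, and if $\dim R_\fp = 1$ invoke Proposition \ref{proposition: height d-1} together with Remark \ref{remark: existence of maximal ideal} (via Proposition \ref{proposition: properties of Jacobson rings}) to see $\fp$ lies in infinitely many maximal ideals, so $\EE(R/\fp)$ is not $F_R$-finite — but it would be a summand of an $F_R$-finite module, a contradiction.

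The main obstacle is the bookkeeping in the localized argument: one must check carefully that the relevant cosyzygy is $F_{R_\fp}$-finite after localization (handled by Remark \ref{proposition: properties of F-modules}(e) and the abelian category property), and then correctly identify which term of the injective resolution of $R_\fp$ is forced to be $F$-finite and why it has infinitely many associated primes. The height bookkeeping is slightly delicate because $\injdim$ can be strictly smaller than $\dim\Supp$ at intermediate stages, so one cannot assume $t$ equals any particular codimension; this is precisely why the induction on $t$ (rather than a direct codimension argument) is the right framework, and why the two-term reduction is essential. Once the problem is localized to $R_\fp$ and phrased as "$\EE(R_\fp/\fp R_\fp)$ is a summand of $\II^1_{R_\fp}$ for a two-term minimal injective resolution of an $F$-finite module," Propositions \ref{proposition: height d-1} and \ref{proposition: height d-2} do essentially all the work.
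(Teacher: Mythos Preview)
Your reduction to the case $t=1$ is flawed. You claim that the cosyzygy $\scr{N}$ is $F_R$-finite because $F_R$-finite $F_R$-modules form an abelian category, but that closure property applies only to maps between $F_R$-finite modules. The terms $\II^j$ of the minimal injective resolution are $F_R$-modules, not $F_R$-finite $F_R$-modules --- indeed, Propositions~\ref{proposition: height d-1} and~\ref{proposition: height d-2} show that $\EE(R/\fp)$ is almost never $F_R$-finite. Concretely, for $\scr{M}=R=k[x,y]$ the first cosyzygy $k(x,y)/k[x,y]$ has infinitely many associated primes (every height-one prime), so it cannot be $F_R$-finite. Thus the inductive step collapses.

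Even granting the reduction, your endgame at $t=1$ does not work. After localizing at $\fp$, the ideal $\fp R_{\fp}$ is the \emph{maximal} ideal of $R_{\fp}$, and $\EE(R_{\fp}/\fp R_{\fp})=\HH^{\hgt(\fp)}_{\fp R_{\fp}}(R_{\fp})$ \emph{is} $F_{R_{\fp}}$-finite; Propositions~\ref{proposition: height d-1} and~\ref{proposition: height d-2} give no contradiction there. Those propositions are stated for localization at a maximal ideal $\fm$ of $R$ containing $\fp$, not for localization at $\fp$ itself, precisely because the Jacobson hypothesis guarantees infinitely many primes strictly between $\fp$ and $\fm$ --- information that is destroyed once you pass to $R_{\fp}$. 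The paper's argument avoids both problems by working instead with $\HH^t_{\fp}(\scr{M})$: this module \emph{is} $F_R$-finite (local cohomology preserves $F_R$-finiteness), and one shows directly over $R$ that it coincides with a direct sum of copies of $\EE(R/\fp)$, contradicting Propositions~\ref{proposition: height d-1} and~\ref{proposition: height d-2}. The key identity $\mu^t(\fp,\scr{M})=\mu^0(\fp,\HH^t_{\fp}(\scr{M}))$ is what replaces your cosyzygy reduction.
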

\begin{proof}
According to \cite[Lemma 1.4]{LyubeznikFinitenessLocalCohomology}, $\mu^t(\fp,\scr{M})=\mu^0(\fp,\HH^t_{\fp}(\scr{M}))$. Assume that $\mu^0(\fp,\HH^t_{\fp}(\scr{M}))\neq 0$ and we will look for a contradiction. 

Since $\mu^0(\fp,\HH^t_{\fp}(\scr{M}))\neq 0$, we must have $\HH^t_{\fp}(\scr{M})_{\fp}\neq 0$; consequently, $\fp$ (being the unique minimal element in the support of $\HH^t_{\fp}(\scr{M})$) must be an associated prime of $\HH^t_{\fp}(\scr{M})$. Under our assumption on $\scr{M}$, we have that $\HH^t_{\fp}(\scr{M})$ has only finitely many associated primes. \vspace{5mm}

{\it Claim.} $\Ass_R(\HH^t_{\fp}(\scr{M}))=\{\fp\}$. 
\begin{proof}[Proof of Claim]
Assume otherwise and let $\fp,\fq_1,\dots,\fq_m$ be the associated primes of $H^t_{\fp}(\scr{M})$, and set $J=\fq_1\cdots \fq_m$. Then $L:=\HH^0_J(\HH^t_{\fp}(\scr{M}))$ is also $F$-finite. Let $N=\HH^t_{\fp}(\scr{M})/L$. We will show that $N=0$, which will produce a contradiction since $\fp$ is {\it not} an associated prime of $L$.

Since $t=\injdim(\scr{M})$, it follows that $\HH^t_{\fp}(\scr{M})$ is a quotient of an injective $R$-module. Given any element $f\in R$, the multiplication by $f$ on any injective module is surjective, hence it also surjective on $N$ and any localization of $N$.

If $\hgt(\fp)=d-1$, then $\fq_1,\dots,\fq_m$ are maximal ideals. Hence $L$ is an injective $R$-module, hence $\HH^t_{\fp}(\scr{M})=L\oplus N$. Since $N$ is a submodule of $\HH^t_{\fp}(\scr{M})$, each associated prime must be an associated prime of $\HH^t_{\fp}(\scr{M})$. It is clear that none of $\fq_1,\dots,\fq_m$ is an associated prime of $N$. Therefore $\fp$ is the only associated prime of $N$. Consequently multiplication by $f\notin \fp$ is also injective on $N$. Thus, $N=N_{\fp}$. Since $N_{\fp}$ is an $F_{R_{\fp}}$-finite $F_{R_{\fp}}$-module and $\dim_{R_{\fp}}(N_{\fp})=0$, it follows from \cite[Theorem 1.4]{LyubeznikFModulesApplicationsToLocalCohomology} that $N_{\fp}$ is a direct sum of finitely copies of $\EE(R_{\fp}/\fp R_{\fp})=\EE(R/\fp)$. To summarize, we have shown that $N$, which is $F_R$-finite, is a direct sum of finitely many copies of $\EE(R/\fp)$. Since $R$ is a Jacobson ring, so is $R/\fp$ (Proposition \ref{proposition: properties of Jacobson rings}). Hence there are infinitely many maximal ideals that contain $\fp$. By Proposition \ref{proposition: height d-1}, $\EE(R/\fp)$ is {\it not} $F_R$-finite; thus $N$ must be 0.

Assume now $\hgt(\fp)\leq d-2$. Since $R$ is a Jacobson ring, there exists a maximal ideal $\fm$ that contains $\fp$ but not any of $\fq_1,\dots,\fq_m$ (Remark \ref{remark: existence of maximal ideal}). Hence $N_{\fm}=\HH^t_{\fp}(\scr{M})_{\fm}$. Over $R_{\fm}$, the only associated prime of $\HH^t_{\fp}(\scr{M})_{\fm}=N_{\fm}$ is $\fp R_{\fm}$. Consequently multiplication by $f\notin \fp R_{\fm}$ on $N_{\fm}$ is injective. Since multiplication by $f\notin \fp R_{\fm}$ on $N_{\fm}$ is also surjective, $(N_{\fm})_{\fp}=N_{\fm}$. The rest of the proof follows the same line as in the previous case, but uses Proposition \ref{proposition: height d-2} instead. We will skip the details.\end{proof}

To summarize, under the assumption that $\mu^0(\fp,\HH^t_{\fp}(\scr{M}))\neq 0$, we have shown $\Ass_R(\HH^t_{\fp}(\scr{M}))=\{\fp\}$. Therefore, given any $f\notin \fp$, the multiplication by $f$ on $\HH^t_{\fp}(\scr{M})$ is injective. Since the multiplication by $f$ on $\HH^t_{\fp}(\scr{M})$ is also surjective ($\HH^t_{\fp}(\scr{M})$ is a quotient of an injective $R$-module), we have $\HH^t_{\fp}(\scr{M})\cong \HH^t_{\fp}(\scr{M})_{\fp}$ which is an injective $R_{\fp}$-module and hence isomorphic to a direct sum of copies of $\EE(R/\fp)$, which is {\it not} $F_R$-finite by Proposition \ref{proposition: height d-1}. This produces the desired contradiction since $\HH^t_{\fp}(\scr{M})$ is $F_R$-finite. 
\end{proof}

\begin{remark}
Following the same line as the proof of Theorem \ref{theorem: only max ideal at last spot}, one can prove the following: {\it let $R$ be a $d$-dimensional noetherian regular ring of prime characteristic and $\scr{M}$ be an $F_R$-finite $F_R$-module. If $\fp$ is a prime ideal of $R$ of height at most $d-2$ and set $t=\injdim_R(\scr{M})$, then $\mu^t(\fp,\scr{M})=0$}. 
\end{remark}

\begin{theorem}
\label{theorem: injective dim of F-module in char p}
Let $R$ be a regular ring of finite type over an infinite field $k$ of characteristic $p>0$. Then 
\[\injdim_R(\scr{M})=\dim_R(\Supp_R(\scr{M}))\]
for each $F_R$-finite $F_R$-module $\scr{M}$.
\end{theorem}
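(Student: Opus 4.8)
\emph{Plan.} The inequality $\injdim_R(\scr M)\le\dim_R(\Supp_R(\scr M))$ holds for every $F_R$-module by \cite{LyubeznikFModulesApplicationsToLocalCohomology} (this is the characteristic $p$ analogue of Proposition \ref{prop: injdim for D-module}), so the whole point is the reverse inequality; setting $s:=\dim_R(\Supp_R(\scr M))$, it suffices to exhibit a maximal ideal $\fm$ with $\mu^s(\fm,\scr M)\ne 0$. If $s=0$ this is immediate: then, after localizing at a maximal ideal of the (finite) support, $\scr M$ is a nonzero finite direct sum of copies of $\EE(R/\fm)$ by \cite[Theorem 1.4]{LyubeznikFModulesApplicationsToLocalCohomology}, so $\injdim_R(\scr M)=0$. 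Assume $s\ge 1$.

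First I would locate a starting Bass number. Since $\scr M$ is $F_R$-finite it has only finitely many associated primes \cite[Theorem 2.12(a)]{LyubeznikFModulesApplicationsToLocalCohomology}; as $\dim R<\infty$ and every prime of $\Supp_R(\scr M)$ contains an associated prime, there is $\fp\in\Ass_R(\scr M)$ with $\dim(R/\fp)=s$, and such a $\fp$ is automatically a minimal prime of $\Supp_R(\scr M)$. Then $\Supp_{R_\fp}(\scr M_\fp)=\{\fp R_\fp\}$ and $\scr M_\fp$ is $F_{R_\fp}$-finite (Remark \ref{proposition: properties of F-modules}), so $\scr M_\fp$ is a nonzero finite direct sum of copies of $\EE(R_\fp/\fp R_\fp)$ by \cite[Theorem 1.4]{LyubeznikFModulesApplicationsToLocalCohomology}; in particular $\mu^0(\fp,\scr M)\ne 0$. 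Now suppose for contradiction that $t:=\injdim_R(\scr M)<s$, and fix a saturated chain $\fp=\fp_0\subsetneq\fp_1\subsetneq\cdots\subsetneq\fp_s=\fm$ with $\fm$ maximal (available since $\dim(R/\fp)=s$). The engine of the argument is the ``one degree up the chain'' implication
\[
\mu^i(\fp_i,\scr M)\ne 0\ \Longrightarrow\ \mu^{i+1}(\fp_{i+1},\scr M)\ne 0\qquad(0\le i<s).
\]
Granting it, induction gives $\mu^t(\fp_t,\scr M)\ne 0$; but $\dim(R/\fp_t)=s-t\ge 1$, so $\fp_t$ is not maximal, contradicting Theorem \ref{theorem: only max ideal at last spot}. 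Hence $t=s$. (Note this is where the Jacobson hypothesis—guaranteed by $R$ being finitely generated over the infinite field $k$ via Proposition \ref{proposition: properties of Jacobson rings}—is used.)

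The hard part is the displayed implication. Localizing at $\fp_{i+1}$ reduces it to the statement: if $(A,\fn)$ is regular local, $\fq$ a prime with $\dim(A/\fq)=1$, and $\scr L$ an $F_A$-finite $F_A$-module with $\mu^i(\fq,\scr L)\ne 0$, then $\mu^{i+1}(\fn,\scr L)\ne 0$. I would prove this by contradiction: in the minimal injective resolution $\scr L\hookrightarrow E^0\to E^1\to\cdots$—which is a complex of $F_A$-modules and $F_A$-maps by Remark \ref{proposition: properties of F-modules}(c)—the hypothesis $\mu^{i+1}(\fn,\scr L)=0$ forces, from $0\to\scr L^i\to E^i\to\scr L^{i+1}\to 0$ applied to $\Gamma_\fn(-)$, that $\Gamma_\fn(\scr L^i)=\Gamma_\fn(E^i)$ is injective, hence splits off $\scr L^i$; the $F_A$-module quotient $\scr L':=\scr L^i/\Gamma_\fn(\scr L^i)$ then has no $\fn$-torsion while $\fq\in\Ass(\scr L')$. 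Using $\dim(A/\fq)=1$ one argues that $\scr L'$ is supported only at $\fq$, so $\scr L'=\scr L'_\fq$ is a finite direct sum of copies of $\EE(A/\fq)$, and the resulting rigidity—of exactly the kind expressed in Propositions \ref{proposition: height d-1} and \ref{proposition: height d-2}, that $F$-finite data cannot reduce to $\EE(A/\fq)$ for a non-maximal $\fq$ contained in infinitely many maximal ideals of the ambient Jacobson ring—produces the contradiction. This is the main obstacle, and it is a genuine one: this ``$F$-module Bass inequality'' is Bass's classical fact for finitely generated modules, but $\scr M$ is very far from finitely generated, and the implication is simply false for arbitrary modules (e.g.\ $\EE(R/\fp)$ itself, for non-maximal $\fp$, has injective dimension $0$ but large support); it is precisely the $F$-module structure on the minimal injective resolution, combined with Propositions \ref{proposition: height d-1}–\ref{proposition: height d-2} and Theorem \ref{theorem: only max ideal at last spot}, that rescues it.
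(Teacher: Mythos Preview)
Your reduction to the local case is where the argument breaks. You claim that localizing at $\fp_{i+1}$ reduces the ``one step up'' implication to: \emph{if $(A,\fn)$ is regular local, $\fq$ a prime with $\dim(A/\fq)=1$, and $\scr{L}$ an $F_A$-finite $F_A$-module with $\mu^i(\fq,\scr{L})\ne 0$, then $\mu^{i+1}(\fn,\scr{L})\ne 0$.} But this local statement is false. Take $\scr{L}=\EE(A/\fq)$ itself: it is injective, so $\mu^1(\fn,\scr{L})=0$, while $\mu^0(\fq,\scr{L})=1$; and by Proposition~\ref{proposition: height d-1} it \emph{is} $F_A$-finite, precisely because in a local ring $\fq$ sits under a single maximal ideal. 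So Proposition~\ref{proposition: height d-1} points the wrong way once you pass to $R_{\fp_{i+1}}$---the ``rigidity'' you invoke (that $F$-finite data cannot reduce to $\EE(A/\fq)$) evaporates there; this is exactly the phenomenon flagged in Remark~\ref{remark: main result fail in local case}. Independently, your sketch never justifies why $\scr{L}'$ is supported only at $\fq$ (the cosyzygy $\scr{L}^i$ can have many associated primes other than $\fq$ and $\fn$), nor why $\scr{L}^i$ or $\scr{L}'$ should be $F_A$-finite or of finite $F$-length---which you would need in order to invoke Propositions~\ref{proposition: height d-1}--\ref{proposition: height d-2} at all, since the terms of the minimal injective resolution are $F$-modules but not $F$-finite.

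The paper avoids any Bass-type chain-climbing. It runs induction on $s=\dim_R(\Supp_R(\scr{M}))$: Noether normalization produces a linear form $y$ with $k[y]\cap\fq_i=0$ for every top-dimensional associated prime $\fq_i$ of $\scr{M}$; inverting $S=k[y]\setminus\{0\}$ yields a ring $S^{-1}R$ that is again regular of finite type over the infinite field $k(y)$, with $\dim_{S^{-1}R}\Supp(S^{-1}\scr{M})=s-1$. The inductive hypothesis then supplies a prime $P\subset S^{-1}R$ with $\mu^{s-1}(P,S^{-1}\scr{M})\ne 0$; its contraction $\fp\subset R$ is non-maximal and satisfies $\mu^{s-1}(\fp,\scr{M})\ne 0$, so Theorem~\ref{theorem: only max ideal at last spot} forces $\injdim_R(\scr{M})>s-1$. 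Crucially, the localization $S^{-1}R$ stays Jacobson, so Theorem~\ref{theorem: only max ideal at last spot} remains available throughout the induction---whereas your localization at $\fp_{i+1}$ destroys the Jacobson property in the very first step.
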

\begin{proof} First, we note that $R$ is a Jacobson ring (Proposition \ref{proposition: properties of Jacobson rings}). Hence Theorem \ref{theorem: only max ideal at last spot} is applicable.

We will use induction on $s=\dim_R(\Supp_R(\scr{M}))$. When $s=0$, the conclusion is clear.

Assume $s\geq 1$. Since $\scr{M}$ is $F_R$-finite, it has finitely many associated primes. Let $\fq_1,\dots,\fq_m$ be all the associated primes of $\scr{M}$ with $\dim(R/\fq_i)=s$. Since $k$ is infinite, by Noether normalization (\cite[Theorem 13.3]{EisenbudBookCommutativeAlgebra}), there are $x_1,\dots,x_d\in R$ that are algebraically independent over $k$ (where $d=\dim(R)$) so that $R$ is a finite $k[x_1,\dots,x_d]$-module and a linear combination of $x_1,\dots, x_d$, denoted by $y$, such that $k[y]\cap \fq_i=0$ for $i=1,\dots,m$. Set $S=k[y]\backslash \{0\}$. Then $S$ is a multiplicatively closed subset of $R$. Consider $S^{-1}R$, which is the same as $k(y)\otimes_{k[y]}R$. Note that $S^{-1}\scr{M}$ is also $F_{S^{-1}R}$-finite, and $S^{-1}R$ is of finite type over an infinite field $k(y)$. By Proposition \ref{proposition: properties of Jacobson rings}, $S^{-1}R$ is still a Jacobson ring. Also note that $\dim(S^{-1}R)=d-1$.

It is clear that $\dim_{S^{-1}R}(\Supp(_{S^{-1}R}(S^{-1}\scr{M})))=s-1$. Hence by our induction hypothesis 
\[\injdim_{S^{-1}R}(S^{-1}\scr{M})=s-1.\] 
Hence there exists a prime ideal $P$ in $S^{-1}R$ such that $\mu^{s-1}_{S^{-1}R}(P, \scr{M})\neq 0$. Let $\fp$ be the prime ideal in $R$ such that $\fp S^{-1}R=P$. Then, $\mu^{s-1}_{R}(\fp, \scr{M})\neq 0$. This already shows that $\injdim_R(\scr{M})\geq s-1$. With $\fp$ being a prime ideal in $S^{-1}R$, it follows that $\hgt(\fp)\leq d-1$. Theorem \ref{theorem: only max ideal at last spot} implies that $\injdim_R(\scr{M})\neq s-1$. Therefore $\injdim_R(\scr{M})=s$. This finishes the proof. 
\end{proof}

\begin{remark}
\label{remark: main result fail in local case}
Both Theorems \ref{theorem: only max ideal at last spot} and \ref{theorem: injective dim of F-module in char p} would fail if $R$ admitted a height $d-1$ prime ideal $\fp$ that's contained in only finitely many maximal ideals of $R$. Indeed, by Proposition \ref{proposition: height d-1}, $\EE(R/\fp)$ would be $F_R$-finite. It would be an injective $R$-module with a 1-dimensional support. 
\end{remark}


\section{Injective dimension of holonomic $D$-modules}
Throughout this section $R=k[x_1,\dots,x_n]$ denotes a polynomial ring over a field $k$. The ring of $k$-linear differential operators on $R$, denoted by $D(R,k)$, can be described explicitly as follows. Let $\partial^{[t]}_i$ denote the $k$-linear differential operators $\frac{1}{t!}\frac{\partial^t}{\partial x^t_i}$. Then $D(R,k)=R\langle \partial^{t_1}_1\cdots \partial^{t_n}_n\mid t_1,\dots,t_n\geq 0\rangle$.

\begin{proposition}
\label{prop: injective res is D-linear}
The minimal injective resolution of $R$
\[0\to R\to \II^0\xrightarrow{\delta^0} \cdots\to \II^j\xrightarrow{\delta^j} \cdots \to \II^n\xrightarrow{\delta^n} 0\]
where $\II^j\cong \bigoplus_{\hgt(\fp)=j}\EE(R/\fp)$, is an exact sequence in the category of $D(R,k)$-modules. Equivalently, each module in this resolution is a $D(R,k)$-module and each differential is $D(R,k)$-linear.
\end{proposition}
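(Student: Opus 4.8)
The plan is to realize the given resolution as the Cousin complex of $R$ with respect to the codimension filtration. That each individual module $\EE(R/\fp)$ is a $D(R,k)$-module is easy — it is the local cohomology $\HH^{\hgt\fp}_{\fp R_{\fp}}(R_{\fp})$ of the $D(R,k)$-module $R_{\fp}$, computed by a \v{C}ech complex of localizations — so the real content is to produce $D(R,k)$-structures that are \emph{simultaneously} compatible with all the differentials $\delta^j$, and the Cousin complex accomplishes this because every module and every map in it is built from operations that preserve $D(R,k)$-linearity.

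In detail: since $R$ is a domain, set $M^{-1}=R$, and recursively, for $p\geq 0$, given an $R$-module $M^{p-1}$ whose support consists of primes of height $\geq p$, put
\[ C^p:=\bigoplus_{\hgt\fp=p}(M^{p-1})_{\fp}, \]
let $d^{p-1}\colon C^{p-1}\to C^p$ be, for $p\geq 1$, the composite of the natural surjection $C^{p-1}\twoheadrightarrow M^{p-1}$ with the sum of the localization maps $M^{p-1}\to (M^{p-1})_{\fp}$, and, for $p=0$, the localization map $R\hookrightarrow \Frac R=C^0$; finally set $M^p:=\coker(d^{p-1})$. An elementary support computation shows that $\Supp_R(M^p)$ consists of primes of height $\geq p+1$ and that each $(M^{p-1})_{\fp}$ is $\fp R_{\fp}$-torsion; moreover, because $R$ is regular — hence Cohen--Macaulay, and, being a polynomial ring, equidimensional with $R_{\fp}$ regular local of dimension $\hgt\fp$ and therefore Gorenstein — one has $(M^{p-1})_{\fp}\cong\HH^{\hgt\fp}_{\fp R_{\fp}}(R_{\fp})\cong\EE_{R_{\fp}}(\kappa(\fp))=\EE_R(R/\fp)$ and the complex $0\to R\to C^0\to\cdots\to C^n\to 0$ is exact. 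This is the classical fact that the Cousin complex of a Cohen--Macaulay ring is its minimal injective resolution. In particular $C^p\cong\bigoplus_{\hgt\fp=p}\EE(R/\fp)\cong\II^p$, and $C^\bullet$ is isomorphic, as a complex of $R$-modules, to the given resolution $\II^\bullet$.

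Now comes the $D$-module bookkeeping. The ring $R$ is a $D(R,k)$-module. For any multiplicatively closed set $S\subseteq R$ and any $D(R,k)$-module $N$, the localization $S^{-1}N$ is again a $D(R,k)$-module via the quotient rule and the canonical map $N\to S^{-1}N$ is $D(R,k)$-linear; this is the extension of differential operators already used in the proof of Proposition~\ref{proposition: localization preserve finite length}, and it is characteristic-free once one works with the divided-power operators $\partial^{[t]}_i$. Furthermore, the category of $D(R,k)$-modules is abelian, so cokernels of $D(R,k)$-linear maps are $D(R,k)$-modules and the canonical projections are $D(R,k)$-linear. By induction on $p$, each $M^{p-1}$ and each $C^p$ in the construction above is therefore a $D(R,k)$-module and each $d^{p-1}$ is $D(R,k)$-linear. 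Transporting this structure along the complex isomorphism $C^\bullet\cong\II^\bullet$ endows each $\II^j$ with a $D(R,k)$-module structure for which every $\delta^j$ is $D(R,k)$-linear, as claimed.

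The main work is not the $D$-module bookkeeping, which is routine, but the identification of $C^\bullet$ with the minimal injective resolution: one must verify the support and torsion statements, confirm that $\hgt\fp+\dim(R/\fp)=n$ for every prime $\fp$ (so that $R_{\fp}$ is regular local of the expected dimension and $\HH^{\hgt\fp}_{\fp R_{\fp}}(R_{\fp})\cong\EE_R(R/\fp)$), and invoke the Cohen--Macaulay--Cousin theorem in this generality. It is worth noting that we only need the \emph{existence} of a $D(R,k)$-module structure compatible with the differentials, not a canonical one, which is precisely what makes the transport-of-structure step legitimate.
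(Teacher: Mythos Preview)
Your proposal is correct and takes essentially the same approach as the paper: both identify the minimal injective resolution with the Cousin complex (the paper invokes \cite[Theorem 5.4]{SharpCousinComplex} to write $\II^j\cong\bigoplus_{\hgt(\fp)=j}\coker(\delta^{j-2})_{\fp}$), and then argue by induction that localizations and cokernels of $D(R,k)$-linear maps remain $D(R,k)$-modules with $D(R,k)$-linear structure maps. The only cosmetic difference is that you build the Cousin complex $C^\bullet$ from scratch and transport structure along $C^\bullet\cong\II^\bullet$, whereas the paper works directly with $\II^\bullet$ via Sharp's description; the mathematical content is the same.
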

\begin{proof}
Since $R$ is regular and hence Gorenstein, \cite[Theorem 5.4]{SharpCousinComplex} shows that 
\[\II^j\cong \bigoplus_{\hgt(\fp)=j}\coker(\delta^{j-2})_{\fp}\] 
and $\delta^{j-1}$ is the composition of $\II^{j-1}\to \coker(\delta^{j-2})\to \II^j\cong \bigoplus_{\hgt(\fp)=j}\coker(\delta^{j-2})_{\fp}$. We will use induction on $j$ to show that each $\II^j$ is a $D(R,k)$-module and each $\delta^{j-1}$ is $D(R,k)$-linear. It is clear that $\II^0$ is the fractional field of $R$ and hence a natural $D(R,k)$-module. The natural inclusion $R\to \II^0$ is clearly $D(R,k)$-linear. Hence $\II^0/R$ is also a $D(R,k)$-module and so is $\II^1\cong \bigoplus_{\hgt(\fp)=1}(\II^0/R)_{\fp}$. Since $\II^0/R\to \II^1\cong \bigoplus_{\hgt(\fp)=1}(\II^0/R)_{\fp}$ is just the natural map from a $D(R,k)$ to a localization of it, it is $D(R,k)$-linear. Assume that we have proved our statement for $\II^l$ and $\delta^{l-1}$ with $l<j$. Then since $\II^j\cong \bigoplus_{\hgt(\fp)=j}\coker(\delta^{j-2})_{\fp}$ and $\delta^{j-1}$ is the composition of $\II^{j-1}\to \coker(\delta^{j-2})\to \II^j\cong \bigoplus_{\hgt(\fp)=j}\coker(\delta^{j-2})_{\fp}$, we see that $\II^j$ is a $D(R,k)$-module and $\delta^{j-1}$ is $D(R,k)$-linear. By induction, we have proved that all $\II^j$ are $D(R,k)$-modules and all $\delta^{j-1}$ are $D(R,k)$-linear for $0\leq j\leq n$. It remains to check $\delta^n$. But it is the zero map, clearly $D(R,k)$-linear. This finishes the proof of our proposition.
\end{proof}

\begin{proposition}
\label{proposition: height n-1 for D-modules}
Let $\fp$ be a prime ideal of $R$ with height $n-1$. Then $\EE(R/\fp)$ does not have finite length in the category of $D(R,k)$-module, and hence it is not holonomic.
\end{proposition}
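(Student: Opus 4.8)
The plan is to transport the argument of Proposition~\ref{proposition: height d-1} from the setting of $F_R$-finite $F_R$-modules to that of finite-length $D(R,k)$-modules. The key input is the $D(R,k)$-linear minimal injective resolution of $R$ furnished by Proposition~\ref{prop: injective res is D-linear}; I would torsion-truncate it at $\fp$ to produce a short exact sequence of $D(R,k)$-modules whose last term manifestly has infinitely many associated primes, contradicting finite length.

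First I would check the vanishing $\HH^n_\fp(R)=0$. Since $\Supp_R\HH^n_\fp(R)\subseteq V(\fp)$ and $\dim(R/\fp)=1$, the only primes at which this module could fail to vanish are $\fp$ itself and the maximal ideals $\fm\supseteq\fp$ (each of height $n$). At $\fp$ we have $\HH^n_\fp(R)_\fp=\HH^n_{\fp R_\fp}(R_\fp)=0$ because $\dim R_\fp=n-1<n$; at such an $\fm$ we have $\HH^n_\fp(R)_\fm=\HH^n_{\fp R_\fm}(R_\fm)=0$ by the Hartshorne--Lichtenbaum Vanishing Theorem \cite[8.2.1]{LCBookBrodmannSharp}, since $R_\fm$ is local of dimension $n$ with $\dim(R_\fm/\fp R_\fm)=1>0$. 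Hence $\HH^n_\fp(R)=0$. Next I would apply $\Gamma_\fp(-)$ to the resolution $0\to R\to\II^0\to\cdots\to\II^n\to 0$ of Proposition~\ref{prop: injective res is D-linear}; since each $\II^j$ is a $D(R,k)$-module and $\Gamma_\fp$ carries a $D(R,k)$-module to a $D(R,k)$-submodule, $\Gamma_\fp(\II^\bullet)$ is a complex of $D(R,k)$-modules with $D(R,k)$-linear differentials computing $\HH^\bullet_\fp(R)$. Because $\Gamma_\fp(\EE(R/\fq))$ equals $\EE(R/\fq)$ if $\fp\subseteq\fq$ and $0$ otherwise, and because every prime containing $\fp$ has height $n-1$ (in which case it is $\fp$ itself) or $n$ (in which case it is maximal), the complex $\Gamma_\fp(\II^\bullet)$ is concentrated in degrees $n-1$ and $n$, where it reads
\[
0\to\EE(R/\fp)\to\bigoplus_{\fp\subseteq\fm\text{ maximal}}\EE(R/\fm)\to 0 .
\]
Taking cohomology and using $\HH^n_\fp(R)=0$ yields the short exact sequence of $D(R,k)$-modules
\[
0\to\HH^{n-1}_\fp(R)\to\EE(R/\fp)\to\bigoplus_{\fp\subseteq\fm\text{ maximal}}\EE(R/\fm)\to 0 .
\]

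To finish I would exploit that $R/\fp$ is a one-dimensional domain of finite type over $k$, hence a Jacobson ring of positive dimension, and therefore has infinitely many maximal ideals (Proposition~\ref{proposition: properties of Jacobson rings}); equivalently, infinitely many maximal ideals of $R$ contain $\fp$, so the $R$-module $\bigoplus_{\fp\subseteq\fm\text{ maximal}}\EE(R/\fm)$ has infinitely many associated primes. If $\EE(R/\fp)$ had finite length in the category of $D(R,k)$-modules, then so would its $D(R,k)$-linear quotient $\bigoplus_{\fp\subseteq\fm\text{ maximal}}\EE(R/\fm)$, and by Remark~\ref{remark: finite length implies finite associated primes} this quotient would have only finitely many associated primes as an $R$-module, a contradiction. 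Thus $\EE(R/\fp)$ does not have finite length in the category of $D(R,k)$-modules; since every holonomic $D(R,k)$-module has finite length in that category (\cite{BjorkBookRIngDiffOperators}), $\EE(R/\fp)$ is not holonomic.

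I do not anticipate a genuine difficulty. The two points needing a little care are that the truncated resolution is honestly a complex of $D(R,k)$-modules with $D(R,k)$-linear maps (which is precisely Proposition~\ref{prop: injective res is D-linear}) and the vanishing $\HH^n_\fp(R)=0$, for which the localization argument together with Hartshorne--Lichtenbaum above is the cleanest route (one could equivalently phrase it as the bound $\operatorname{cd}_R(\fp)\le n-1$).
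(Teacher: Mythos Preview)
Your proposal is correct and follows essentially the same approach the paper intends: it is precisely the transport of the argument of Proposition~\ref{proposition: height d-1} to the $D(R,k)$-module setting, using Proposition~\ref{prop: injective res is D-linear} for the $D$-linearity of the resolution and Remark~\ref{remark: finite length implies finite associated primes} in place of $F_R$-finiteness to force finiteness of associated primes. Your write-up is in fact more detailed than the paper's own proof, which merely points to Proposition~\ref{proposition: height d-1} and skips the details.
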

\begin{proof}
The proof is nearly identical to the one of Proposition \ref{proposition: height d-1}; the only modification is to use finite length, instead of $F_R$-finiteness, to guarantee finiteness of associated primes. We skip the details.
\end{proof}

\begin{proposition}
\label{proposition: height n-2 for D-modules}
Let $\fp$ be a prime ideal of $R$ of height $\leq n-2$. Then $\EE(R/\fp)=\EE(R/\fp)_{\fm}$ does not have finite length in the category of $D(R_{\fm},k)$-modules for each maximal ideal $\fm$ that contains $\fp$.

In particular, if $\hgt(\fp)\leq n-2$, then $\EE(R/\fp)$ does not have finite length in the category of $D(R,k)$-modules.
\end{proposition}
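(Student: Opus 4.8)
The plan is to run the proof of Proposition~\ref{proposition: height d-2} essentially verbatim, reading ``has finite length in the category of $D(-,k)$-modules'' wherever that proof says ``is $F_R$-finite'', and invoking Proposition~\ref{proposition: localization preserve finite length} wherever it used that $F$-finiteness is inherited under localization. It is convenient to prove the following uniform statement: \emph{for every localization $A$ of $R$ at a prime ideal, with maximal ideal $\fn$ and $\dim A=d$, and every prime $\fq_0\subset A$ of height $d-2$, the module $\EE(A/\fq_0)$ does not have finite length over $D(A,k)$.} The proposition follows: if $\hgt\fp=n-2$ take $A=R_\fm$ and $\fq_0=\fp R_\fm$ (so $\hgt\fq_0=n-2=\dim A-2$ since $\fm$ is maximal and $\EE(R/\fp)_\fm=\EE(R/\fp)=\EE(A/\fq_0)$); if $\hgt\fp\le n-3$ choose, using that $R$ is catenary, a prime $\fq$ with $\fp\subseteq\fq\subseteq\fm$ and $\hgt\fq=\hgt\fp+2$, apply the statement to $A=R_\fq$, $\fq_0=\fp R_\fq$, and use $\EE(R/\fp)_\fm=\EE(R/\fp)=\EE(R_\fq/\fp R_\fq)$ together with Proposition~\ref{proposition: localization preserve finite length} to descend finite length from $D(R,k)$ to $D(R_\fm,k)$ to $D(R_\fq,k)$.

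So fix $A$, $\fn$, $d$, $\fq_0$ as above and suppose for contradiction that $\EE(A/\fq_0)$ has finite length over $D(A,k)$. Since $A$ is regular (hence Gorenstein), it has a minimal injective resolution $0\to A\to\II^0\xrightarrow{\delta^0}\cdots\to\II^d=\EE(A/\fn)\to 0$ with $\II^j\cong\bigoplus_{\hgt\fQ=j}\EE(A/\fQ)$; this is the localization of the minimal injective resolution of the polynomial ring $R$, and since the localization functor $S^{-1}(-)$ sends $D(R,k)$-linear maps of $R$-modules to $D(S^{-1}R,k)$-linear maps of $S^{-1}R$-modules (cf.\ the argument in Proposition~\ref{proposition: localization preserve finite length}), Proposition~\ref{prop: injective res is D-linear} shows it is a resolution in the category of $D(A,k)$-modules; applying the subfunctor $\Gamma_{\fq_0}$ of the identity keeps us in that category. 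Using $\Gamma_{\fq_0}\II^{d-2}=\EE(A/\fq_0)$, $\Gamma_{\fq_0}\II^{d-1}=\bigoplus_{\fq_0\subseteq\fQ,\ \hgt\fQ=d-1}\EE(A/\fQ)$, $\Gamma_{\fq_0}\II^{d}=\EE(A/\fn)$, and the Hartshorne--Lichtenbaum Vanishing Theorem (which gives $\HH^d_{\fq_0}(A)=0$, as $\hgt\fq_0<\dim A$ and $\widehat A$ is a domain), I obtain three short exact sequences of $D(A,k)$-modules:
\begin{align*}
0 &\to \HH^{d-2}_{\fq_0}(A)\to \EE(A/\fq_0)\to \image(\delta^{d-2})\to 0,\\
0 &\to \image(\delta^{d-2})\to \ker(\delta^{d-1})\to \HH^{d-1}_{\fq_0}(A)\to 0,\\
0 &\to \ker(\delta^{d-1})\to \Gamma_{\fq_0}\II^{d-1}\to \EE(A/\fn)\to 0.
\end{align*}
The first sequence gives that $\image(\delta^{d-2})$ has finite length over $D(A,k)$. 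Now $\HH^{d-1}_{\fq_0}(A)\cong\HH^{d-1}_{J}(R)\otimes_R A$ (with $J=\fq_0\cap R$) and $\EE(A/\fn)\cong\HH^d_{\fn}(A)\cong\HH^d_{\fn\cap R}(R)\otimes_R A$ are localizations of local cohomology modules of the polynomial ring $R$, which have finite length over $D(R,k)$ (they are holonomic, \cite{LyubeznikFinitenessLocalCohomology}); hence by Proposition~\ref{proposition: localization preserve finite length} both have finite length over $D(A,k)$. The second sequence then forces $\ker(\delta^{d-1})$, and the third forces $\Gamma_{\fq_0}\II^{d-1}$, to have finite length over $D(A,k)$, whence by Remark~\ref{remark: finite length implies finite associated primes} the module $\Gamma_{\fq_0}\II^{d-1}$ has only finitely many associated primes. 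But $\Ass_A(\Gamma_{\fq_0}\II^{d-1})$ is exactly the set of height $d-1$ primes of $A$ containing $\fq_0$, i.e.\ the preimages of the height-one primes of the two-dimensional local domain $A/\fq_0$, of which there are infinitely many. This contradiction completes the proof, and the ``in particular'' clause follows from Proposition~\ref{proposition: localization preserve finite length} as in the first paragraph.

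I expect the only steps needing genuine care — as opposed to a transcription of the characteristic-$p$ argument — to be the compatibility of Proposition~\ref{prop: injective res is D-linear} with localization (so that the minimal injective resolution of $A=R_\fm$, which is not itself a polynomial ring, is a complex of $D(A,k)$-modules and the $\Gamma_{\fq_0}$-sequences are $D(A,k)$-linear), and the identification of $\HH^{d-1}_{\fq_0}(A)$ and $\EE(A/\fn)$ as localizations of holonomic $D(R,k)$-modules so that Proposition~\ref{proposition: localization preserve finite length} applies. In characteristic $p$ these points were invisible because the theory of $F$-modules already lives over arbitrary regular rings; here the finiteness has to be imported from the polynomial ring by localization, and this bookkeeping is the main, though routine, obstacle.
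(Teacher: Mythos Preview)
Your proof is correct and follows essentially the same route as the paper's: the paper simply says to rerun the proof of Proposition~\ref{proposition: height d-2} with ``finite length in the category of $D$-modules'' in place of ``$F_R$-finite'' (invoking Proposition~\ref{proposition: localization preserve finite length} and Remark~\ref{remark: finite length implies finite associated primes} at the appropriate points), and you have spelled this out explicitly. The extra care you take---verifying that the localized minimal injective resolution is $D(A,k)$-linear via Proposition~\ref{prop: injective res is D-linear}, and importing finite length of $\HH^{d-1}_{\fq_0}(A)$ and $\EE(A/\fn)$ from holonomicity over the polynomial ring via Proposition~\ref{proposition: localization preserve finite length}---fills in exactly the details the paper skips.
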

\begin{proof}
First, by Proposition \ref{proposition: localization preserve finite length}, if $\EE(R/\fp)$ had finite length in the category of $D(R,k)$-modules, then so would $\EE(R/\fp)=\EE(R/\fp)_{\fm}$  in the category of $D(R_{\fm},k)$-modules. hence it suffices to prove the first conclusion. The proof of our first conclusion is nearly identical to the proof of Proposition \ref{proposition: height d-2}; the only modification is to use finite length, instead of $F_R$-finiteness, to guarantee finiteness of associated primes. We skip the details.
\end{proof}

The proof of the following theorem is a slight modification of the one of Theorem \ref{theorem: only max ideal at last spot}. For clarity and completeness, we include a proof.
\begin{theorem}
\label{theorem: only max ideal at last spot for D-modules}
Let $M$ be a holonomic $D(R,k)$-module. Set $\injdim_R(M)=t$. Then $\mu^t(\fp,M)=0$ for each non-maximal prime ideal $\fp$. 
\end{theorem}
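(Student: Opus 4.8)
The plan is to mirror the proof of Theorem \ref{theorem: only max ideal at last spot} almost verbatim, replacing the input ``$F_R$-finite'' with ``finite length in the category of $D(R,k)$-modules'' and using the $D$-module analogues of the auxiliary results established in this section. By \cite[Lemma 1.4]{LyubeznikFinitenessLocalCohomology} we have $\mu^t(\fp,M)=\mu^0(\fp,\HH^t_{\fp}(M))$, so it suffices to assume $\mu^0(\fp,\HH^t_{\fp}(M))\neq 0$ for some non-maximal prime $\fp$ and derive a contradiction. First I would record that $\HH^t_{\fp}(M)$ is a $D(R,k)$-module (apply $\Gamma_{\fp}$ to the $D$-linear minimal injective resolution of $R$ from Proposition \ref{prop: injective res is D-linear} and use that $M$ is obtained from $R$ by operations preserving $D$-linearity), and that since $M$ is holonomic it has finite length as a $D(R,k)$-module, whence $\HH^t_{\fp}(M)$ does too (local cohomology of a finite-length $D$-module is again finite length); in particular $\HH^t_{\fp}(M)$ has only finitely many associated primes by Remark \ref{remark: finite length implies finite associated primes}. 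Since $\mu^0(\fp,\HH^t_{\fp}(M))\neq 0$, the prime $\fp$ lies in $\Supp_R(\HH^t_{\fp}(M))$ and, being its unique minimal element, is an associated prime.

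The core is the claim $\Ass_R(\HH^t_{\fp}(M))=\{\fp\}$. I would prove it exactly as in Theorem \ref{theorem: only max ideal at last spot}: writing $\fp,\fq_1,\dots,\fq_m$ for the associated primes and $J=\fq_1\cdots\fq_m$, set $L=\HH^0_J(\HH^t_{\fp}(M))$, which is again a finite-length $D(R,k)$-module, and $N=\HH^t_{\fp}(M)/L$. Because $t=\injdim_R(M)$, the module $\HH^t_{\fp}(M)$ is a quotient of an injective $R$-module, so multiplication by any $f\in R$ is surjective on it and on $N$. In the case $\hgt(\fp)=n-1$ the $\fq_i$ are maximal, so $L$ is injective, $\HH^t_{\fp}(M)=L\oplus N$, and $N$ has $\fp$ as its only associated prime; hence $f\notin\fp$ acts injectively on $N$, so $N=N_{\fp}$. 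Since $R$ is a polynomial ring, $R/\fp$ has infinitely many maximal ideals, and by Proposition \ref{prop: D-module localized at minimal prime} the module $N_{\fp}$ is injective over $R_{\fp}$, hence a direct sum of copies of $\EE(R/\fp)$. But $\EE(R/\fp)$ does not have finite length as a $D(R,k)$-module by Proposition \ref{proposition: height n-1 for D-modules}, so $N=0$. When $\hgt(\fp)\leq n-2$, I would pick (using that $R$ is a Jacobson ring, Proposition \ref{proposition: properties of Jacobson rings}, and Remark \ref{remark: existence of maximal ideal}) a maximal ideal $\fm\supseteq\fp$ avoiding all $\fq_i$, so $N_{\fm}=\HH^t_{\fp}(M)_{\fm}$ has $\fp R_{\fm}$ as its only associated prime; then $(N_{\fm})_{\fp}=N_{\fm}$, it is injective over $R_{\fp}$ by Proposition \ref{prop: D-module localized at minimal prime}, hence a sum of copies of $\EE(R/\fp)$, contradicting Proposition \ref{proposition: height n-2 for D-modules} (via Proposition \ref{proposition: localization preserve finite length}) unless $N=0$. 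This establishes the claim.

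Granting $\Ass_R(\HH^t_{\fp}(M))=\{\fp\}$, every $f\notin\fp$ acts both injectively (no other associated primes) and surjectively (quotient of an injective) on $\HH^t_{\fp}(M)$, so $\HH^t_{\fp}(M)\cong\HH^t_{\fp}(M)_{\fp}$, which is an injective $R_{\fp}$-module by Proposition \ref{prop: D-module localized at minimal prime} and therefore a direct sum of copies of $\EE(R/\fp)$. This contradicts the fact that $\HH^t_{\fp}(M)$ has finite length as a $D(R,k)$-module, since $\EE(R/\fp)$ does not (Proposition \ref{proposition: height n-1 for D-modules}). Hence $\mu^0(\fp,\HH^t_{\fp}(M))=0$, i.e. $\mu^t(\fp,M)=0$.

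The step I expect to require the most care is the bookkeeping that $\HH^t_{\fp}(M)$, and the submodule $L=\HH^0_J(\HH^t_{\fp}(M))$, genuinely inherit finite length in the category of $D(R,k)$-modules: one must invoke that $D(R,k)$-linearity of the differentials (Proposition \ref{prop: injective res is D-linear}) makes $\HH^t_{\fp}(-)$ and $\HH^0_J(-)$ functors on $D(R,k)$-modules, and that the relevant finiteness (holonomicity, or at least finite length) is preserved under these local cohomology operations. Everything else is a faithful transcription of the characteristic-$p$ argument.
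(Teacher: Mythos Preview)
Your proposal is correct and follows essentially the same route as the paper's proof: both reduce to $\mu^0(\fp,\HH^t_{\fp}(M))$ via \cite[Lemma 1.4]{LyubeznikFinitenessLocalCohomology}, establish the claim $\Ass_R(\HH^t_{\fp}(M))=\{\fp\}$ by the same two-case analysis on $\hgt(\fp)$ (invoking Propositions \ref{prop: D-module localized at minimal prime}, \ref{proposition: height n-1 for D-modules}, \ref{proposition: height n-2 for D-modules}, and \ref{proposition: localization preserve finite length} exactly as you do), and then conclude by identifying $\HH^t_{\fp}(M)$ with a sum of copies of $\EE(R/\fp)$. The only cosmetic differences are that the paper phrases the contradiction in terms of ``holonomic'' rather than ``finite length'' (equivalent here, since holonomic modules have finite length and local cohomology preserves holonomicity), and that your parenthetical about obtaining $\HH^t_{\fp}(M)$ ``from $R$'' via Proposition \ref{prop: injective res is D-linear} is slightly off---the relevant fact is simply that local cohomology of a holonomic $D(R,k)$-module is again holonomic, which the paper also takes for granted.
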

\begin{proof}
According to \cite[Lemma 1.4]{LyubeznikFinitenessLocalCohomology}, $\mu^t(\fp,M)=\mu^0(\fp,\HH^t_{\fp}(M))$. Assume that $\mu^0(\fp,\HH^t_{\fp}(M))\neq 0$ and we will look for a contradiction. 

Since $\mu^0(\fp,\HH^t_{\fp}(M))\neq 0$, we must have $\HH^t_{\fp}(M)_{\fp}\neq 0$; consequently, $\fp$ (being the unique minimal element in the support of $\HH^t_{\fp}(M)$) must be an associated prime of $\HH^t_{\fp}(M)$. Under our assumption on $M$, we have that $\HH^t_{\fp}(M)$ has only finitely many associated primes. \vspace{3mm}

{\it Claim.} $\Ass_R(\HH^t_{\fp}(M))=\{\fp\}$.
\begin{proof}[Proof of Claim]
Assume otherwise. Let $\fp,\fq_1,\dots,\fq_m$ be the associated primes of $H^t_{\fp}(M)$, and set $J=\fq_1\cdots \fq_m$. Then $L:=\HH^0_J(\HH^t_{\fp}(M))$ is also $F$-finite. Let $N=\HH^t_{\fp}(M)/L$. We will show that $N=0$, which will produce the desired contradiction since $\fp$ is {\it not} an associated prime of $L$.

Since $t=\injdim(M)$, it follows that $\HH^t_{\fp}(M)$ is a quotient of an injective $R$-module. Given any element $f\in R$, the multiplication by $f$ on any injective module is surjective, hence it also surjective on $N$ and any localization of $N$.

If $\hgt(\fp)=n-1$, then $\fq_1,\dots,\fq_m$ are maximal ideals. Hence $L$ is an injective $R$-module, hence $\HH^t_{\fp}(M)=L\oplus N$. Since $N$ is a submodule of $\HH^t_{\fp}(M)$, each associated prime must be an associated prime of $\HH^t_{\fp}(M)$. It is clear that none of $\fq_1,\dots,\fq_m$ is an associated prime of $N$. Therefore $\fp$ is the only associated prime of $N$. Consequently multiplication by $f\notin \fp$ is also injective on $N$. Thus, $N=N_{\fp}$. Since $\fp$ is a minimal prime of $N$, by Proposition \ref{prop: D-module localized at minimal prime}, $N_{\fp}$ is an injective $R_{\fp}$-module. Since $\fp$ is the only associated prime of $N$, it follows that $N_{\fp}$ is a direct sum of finitely copies of $\EE(R_{\fp}/\fp R_{\fp})=\EE(R/\fp)$. To summarize, we have shown that $N$, which is holonomic, is a direct sum of finitely many copies of $\EE(R/\fp)$. Since $R$ is a Jacobson ring, so is $R/\fp$ (Proposition \ref{proposition: properties of Jacobson rings}). Hence there are infinitely many maximal ideals that contain $\fp$. By Proposition \ref{proposition: height n-1 for D-modules}, $\EE(R/\fp)$ is {\it not} holonomic; thus $N$ must be 0.

Assume now $\hgt(\fp)\leq n-2$. Since $R$ is a Jacobson ring. Hence there exists a maximal ideal $\fm$ that contains $\fp$ but not any of $\fq_1,\dots,\fq_m$ (Remark \ref{remark: existence of maximal ideal}). Hence $N_{\fm}=\HH^t_{\fp}(M)_{\fm}$. Over $R_{\fm}$, the only associated prime of $\HH^t_{\fp}(M)_{\fm}=N_{\fm}$ is $\fp R_{\fm}$. Consequently multiplication by $f\notin \fp R_{\fm}$ on $N_{\fm}$ is injective. Since multiplication by $f\notin \fp R_{\fm}$ on $N_{\fm}$ is also surjective, $(N_{\fm})_{\fp}=N_{\fm}$. As in the previous paragraph, $(N_{\fm})_{\fp}$ is an injective $R_{\fp}$-module. Since $\fp$ is a minimal prime of $N$, it follows that $(N_{\fm})_{\fp}$ is a direct sum of $\EE(R/\fp)$. By Proposition \ref{proposition: localization preserve finite length}, $(N_{\fm})_{\fp}=N_{\fm}$ has finite length in the category of $D(R_{\fm},k)$-modules. If $(N_{\fm})_{\fp}$ were not zero, then $\EE(R/\fp)$ would have finite length in the category of $D(R_{\fm},k)$-modules, contradicting Proposition \ref{proposition: height n-2 for D-modules}. So $N_{\fp}=(N_{\fm})_{\fp}=0$. But $\fp$ is in the support of $N$, this forces $N=0$.\end{proof}

To summarize, under the assumption that $\mu^0(\fp,\HH^t_{\fp}(M))\neq 0$, we have shown $\Ass_R(\HH^t_{\fp}(M))=\{\fp\}$. Therefore, given any $f\notin \fp$, the multiplication by $f$ on $\HH^t_{\fp}(M)$ is injective. Since the multiplication by $f$ on $\HH^t_{\fp}(M)$ is also surjective ($\HH^t_{\fp}(M)$ is a quotient of an injective $R$-module), we have $\HH^t_{\fp}(M)\cong \HH^t_{\fp}(M)_{\fp}$ which is an injective $R_{\fp}$-module and hence isomorphic to a direct sum of copies of $\EE(R/\fp)$, which is {\it not} holonomic by Proposition \ref{proposition: height n-1 for D-modules}. This produces the desired contradiction since $\HH^t_{\fp}(M)$ is holonomic. 
\end{proof}

\begin{theorem}
\label{thm: injective dim of holonomic D-module}
Let $k$ be a field of characteristic 0 and $R=k[x_1,\dots,x_n]$ be a polynomial ring over $k$. If $M$ is a holonomic $D(R,k)$-module, then 
\[\injdim_R(M)=\dim_R(\Supp_R(M)).\]
\end{theorem}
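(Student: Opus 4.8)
The plan is to run the characteristic-$0$ argument in exact parallel with the proof of Theorem~\ref{theorem: injective dim of F-module in char p}, using Theorem~\ref{theorem: only max ideal at last spot for D-modules}, Proposition~\ref{prop: holonomic localized} and Proposition~\ref{prop: injdim for D-module} in place of the corresponding characteristic-$p$ ingredients. We may assume $M\neq 0$ and induct on $s:=\dim_R(\Supp_R(M))$. For the base case $s=0$: since $M$ is holonomic it has finite length as a $D(R,k)$-module, hence finitely many associated primes (Remark~\ref{remark: finite length implies finite associated primes}), so $\Supp_R(M)$ is a finite set of maximal ideals; combined with $\injdim_R(M)\le \dim_R(\Supp_R(M))=0$ from Proposition~\ref{prop: injdim for D-module}, this gives $\injdim_R(M)=0$.

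For the inductive step, assume $s\ge 1$. Let $\fq_1,\dots,\fq_m$ be the associated primes $\fq$ of $M$ with $\dim(R/\fq)=s$. Since $k$ has characteristic $0$ it is infinite, so a standard general-position (Noether normalization) argument produces a $k$-linear form $y$ whose image in each $R/\fq_i$ is transcendental over $k$, i.e.\ $k[y]\cap\fq_i=0$ for all $i$; completing $y$ to a system of coordinates and applying the associated linear automorphism of $D(R,k)$ (which preserves holonomicity, as it preserves the Bernstein filtration), I may assume $y=x_n$. Set $S=k[x_n]\setminus\{0\}$ and $R'=S^{-1}R=k(x_n)[x_1,\dots,x_{n-1}]$, a polynomial ring over the characteristic-$0$ field $k(x_n)$. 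By Proposition~\ref{prop: holonomic localized}, $S^{-1}M$ is a holonomic $D(R',k(x_n))$-module, and it is nonzero because $R/\fq_1\hookrightarrow M$ while $k[x_n]\cap\fq_1=0$. A transcendence-degree count shows $\dim(R'/\fp R')=\dim(R/\fp)-1$ for every $\fp\in\Supp_R(M)$ with $\fp\cap k[x_n]=0$; since every prime of $\Supp_R(M)$ of dimension $s$ is minimal in $\Supp_R(M)$, hence associated to $M$, hence one of the $\fq_i$, it follows that $\dim_{R'}(\Supp_{R'}(S^{-1}M))=s-1$.

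Now apply the induction hypothesis to $S^{-1}M$ over $R'$: it gives $\injdim_{R'}(S^{-1}M)=s-1$, so there is a prime $P\subset R'$ with $\mu^{s-1}_{R'}(P,S^{-1}M)\neq 0$. Writing $\fp=P\cap R$ (so $\fp S^{-1}R=P$), the identifications $R_\fp=R'_P$ and $M_\fp=(S^{-1}M)_P$ give $\mu^{s-1}_R(\fp,M)\neq 0$, whence $\injdim_R(M)\ge s-1$. Moreover $\fp\cap k[x_n]=0$, whereas every maximal ideal of $R$ meets $k[x_n]$ nontrivially (Nullstellensatz), so $\fp$ is not maximal. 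Therefore Theorem~\ref{theorem: only max ideal at last spot for D-modules}, applied with $t=\injdim_R(M)$, forces $\mu^t(\fp,M)=0$, so $t\neq s-1$. Together with $\injdim_R(M)\le s$ from Proposition~\ref{prop: injdim for D-module}, this yields $\injdim_R(M)=s$, completing the induction.

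Essentially all the hard work has been done in the preparatory results, so there is no single decisive step; the points that need care are (i) selecting the linear form $y$ and verifying that the resulting linear change of coordinates preserves holonomicity, so that Proposition~\ref{prop: holonomic localized} is applicable, and (ii) checking that inverting $k[x_n]\setminus\{0\}$ decreases the dimension of the support by exactly one (this is where the $\fq_i$ being handled correctly matters). Note that, unlike the characteristic-$p$ statement, no "infinite field" hypothesis is needed, since a field of characteristic $0$ is automatically infinite.
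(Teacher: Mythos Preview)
Your proof is correct and follows essentially the same approach as the paper's own proof: induction on $s=\dim_R(\Supp_R(M))$, a Noether-normalization choice of a linear form $y$ avoiding the top-dimensional associated primes, localization at $S=k[y]\setminus\{0\}$ together with Proposition~\ref{prop: holonomic localized} to stay holonomic over $k(y)[x_1,\dots,x_{n-1}]$, and then Theorem~\ref{theorem: only max ideal at last spot for D-modules} to rule out $\injdim_R(M)=s-1$. If anything, you are more careful than the paper in justifying the coordinate change preserving holonomicity, the nonvanishing of $S^{-1}M$, the exact drop in support dimension, and the non-maximality of the pulled-back prime $\fp$.
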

\begin{proof}
The proof follows the same line as in the one of Theorem\ref{theorem: injective dim of F-module in char p}; we opt to include a proof here for the sake of clarity and completeness. We will use induction on $s=\dim_R(\Supp_R(M))$. When $s=0$, the conclusion is clear by Proposition \ref{prop: injdim for D-module}.

Assume $s\geq 1$. Since $M$ is holonomic, it has finitely many associated primes. Let $\fq_1,\dots,\fq_m$ be all the associated primes of $M$ with $R/\fq_i=s$. Since $k$ is infinite, by Noether normalization (\cite[Theorem 13.3]{EisenbudBookCommutativeAlgebra}), there are $x_1,\dots,x_d\in R$ that are algebraically independent over $k$ (where $d=\dim(R)$) so that $R$ is a finite $k[x_1,\dots,x_d]$-module and a linear combination of $x_1,\dots, x_d$, denoted by $y$, such that $k[y]\cap \fq_i=0$ for $i=1,\dots,m$.  Set $S=k[x_n]\backslash \{0\}$. Then $S$ is a multiplicatively closed subset of $R$. Note that $S^{-1}R=k(x_n)[x_1,\dots,x_{n-1}]$. It follows from Proposition \ref{prop: holonomic localized} that $S^{-1}M$ is a holonomic $D(S^{-1}R,k(x_n))$-module. It is clear that $\dim_{S^{-1}R}(\Supp(_{S^{-1}R}(S^{-1}M)))=s-1$. Hence by our induction hypothesis 
\[\injdim_{S^{-1}R}(S^{-1}M)=s-1.\] 
Hence there exists a prime ideal $P$ in $S^{-1}R$ such that $\mu^{s-1}_{S^{-1}R}(P, M)\neq 0$. Let $\fp$ be the prime ideal in $R$ such that $\fp S^{-1}R=P$. Then, $\mu^{s-1}_{R}(\fp, M)\neq 0$. This already shows that $\injdim_R(M)\geq s-1$. With $\fp$ being a prime ideal in $S^{-1}R$, it follows that $\hgt(\fp)\leq d-1$. Theorem \ref{theorem: only max ideal at last spot for D-modules} implies that $\injdim_R(M)\neq s-1$. Therefore $\injdim_R(M)=s$. This finishes the proof. 
\end{proof}

\begin{remark}
According to \cite[2.2(d)]{LyubeznikFinitenessLocalCohomology}, $\mathcal{T}(R)$ is a holonomic $D(R,k)$-module for each Lyubeznik functor $\mathcal{T}$. Therefore, our Theorems \ref{theorem: only max ideal at last spot for D-modules} and \ref{thm: injective dim of holonomic D-module} generalize the main results in \cite{PuthenpurakaInjectiveResolutionofLC}.
\end{remark}

\bibliographystyle{skalpha}
\bibliography{injectivedimension}

\end{document}